\DeclareMathOperator{\sgn}{sgn}
\DeclareMathOperator{\MCG}{MCG}
\DeclareMathOperator{\PMCG}{PMCG}
\DeclareMathOperator{\PSL}{PSL}
\DeclareMathOperator{\Dev}{Dev}
\DeclareMathOperator{\arccosh}{arccosh}
\DeclareMathOperator{\GL}{GL}
\DeclareMathOperator{\Aut}{Aut}
\DeclareMathOperator{\Hom}{Hom}
\begin{document}

\theoremstyle{definition}
\newtheorem{exmp}{Example}[section]
\newtheorem{defn}{Definition}[section]
\newtheorem{con}{Conjecture}[section]
\newtheorem{rem}{Remark}[section]
\theoremstyle{plain}
\newtheorem{thm}{Theorem}[section]
\newtheorem{lem}{Lemma}[section]

\newtheorem{cor}{Corollary}[section]

\title{Mutation Invariant Functions on Cluster Ensembles}

\author{Dani Kaufman}
\address{University of Copenhagen \\
         Department of Mathematical Sciences \\
         2100 Copenhagen \o, Denmark }
\email{dk@math.ku.dk}
\thanks{Special thanks to Zachary Greenberg and Christian Zickert for  helpful advice and encouragement and to Hugh Thomas for useful comments and suggestions.}

\subjclass[2020]{13F60, 30F60}

%\keywords{Cluster Algebras, Cluster Ensembles, Markov Numbers, Teichmuller Spaces, Somos Sequences.}

\date{}

%\dedicatory{}

\begin{abstract}

We define the notion of a mutation invariant function on a cluster ensemble with respect to a group action of the cluster modular group on its associated function fields. We realize many examples of previously studied functions as elements of this type of invariant ring and give many new examples. We show that these invariants have geometric and number theoretic interpretations, and classify them for ensembles associated to affine Dynkin diagrams. The primary tool in used in this classification is the relationship between cluster algebras and the Teichmüller theory of surfaces. 

\smallskip
\smallskip
\noindent \textbf{Cluster Algebras, Cluster Ensembles, Markov Numbers, Teichmuller Spaces, Somos Sequences.}

\end{abstract}

\maketitle

\section{Introduction}

Cluster algebras have emerged as important structures that underlie many different algebraic objects, including the algebraic structure of Teichm\"uller space and the structure of scattering amplitudes in $\mathcal{N}=4$ supersymmetric Yang Mills theory. In the cases where the cluster algebra at hand is of finite type, all of the important combinatorial and algebraic data that can be derived from it can be easily computed and studied. Complications begin to arise when the algebra is no longer of finite type, as can be seen when attempting to compute scattering amplitudes of more than 7 particles in $\mathcal{N}=4$ SYM theory. This paper is an attempt to introduce and study a collection of functions assigned to any cluster algebra, or more generally any ``cluster ensemble'', that are invariant under certain mutation sequences.

Several examples of cluster ensemble invariants have already been noted in the literature, but they have not been studied in a unified way. Of these examples, the invariant of the Markov quiver, shown in example \ref{exmp:markov}, is probably the most well known and studied. This function is directly related to the classical Markov Diophantine equation $x^2+y^2+z^2 = 3xyz$.  
Some examples of similar Diophantine equations arising from cluster were studied in \cite{Lampe}, and we recall such an equation in example \ref{exmp:bc24}. The invariants of the Somos 4 and 5 sequences studied in \cite{Somos} are further examples, shown in example \ref{exmp:somos}. The invariants in each of these examples provide useful tools for understanding the number theoretical properties of each sequence. 

All of these previously studied functions are invariant under the action of a single mutation. The primary new ingredient of this paper is the introduction of an action of the automorphism group of the exchange complex of a cluster ensemble, or ``cluster modular group'', on the function fields of the ensemble. We show that each of the previously known examples can be realized as elements of invariant rings of this group action. We use this definition to generalize previously studied examples to invariants associated to more complicated subgroups of the cluster modular group.

% We hope that this theory provides a method to understand and tame the infiniteness of certain cluster ensembles. 
% When the cluster modular group is infinite, we naturally have some repeating structures underlying the mutation structure of the associated ensemble. If we can find invariants for a finite index normal subgroup of the cluster modular group, then we may consider these functions as naturally being defined on a finite quotient of the exchange complex. 

The primary structural theorem proved in this paper is a classification of invariants associated to an ``affine'' cluster ensemble i.e. a cluster ensemble associated to an affine $ADE$ Dynkin diagram. Affine cluster ensembles are essentially the simplest ensembles with an infinite cluster modular group, and the properties of their invariants should be indicative of the structure seen in general invariants. The proof of this classification relies heavily on a geometric interpretation of affine ensembles and associated cluster modular groups. Essentially, the classification follows by relating invariants in the $\tilde{A}_1$ case to the ring of functions on the $\PSL(2,\mathbb{Z})$ character variety associated with an annulus. 
The invariants classified in Section \ref{sec:affine} are invariant for the same finite index normal subgroup defined in \cite{kauf:Modular_groups} of affine type cluster modular groups which is used to define ``affine generalized associahedra''. Thus, the classification of these functions provides a basis of the field of rational functions on each affine generalized associahedra. 

We may also use the invariance of a function to study the limiting behavior of the cluster variables found along a mutation path.
Recently, a cluster algebraic interpretation of some of the symbols of 8 particle $\mathcal{N}=4$ SYM scattering amplitudes has been studied in \cite{SYMbols} and \cite{arkanihamed2019nonperturbative}. These symbols are related to the limiting structure of an affine cluster algebra and were studied using an invariant of the $\widetilde{A}_1$ affine cluster algebra. We briefly show a similar analysis of this limiting behavior in example \ref{exmp:a11}.

We give strong evidence through an abundance of examples of a deeper theory underlying the existence and structure of cluster ensemble invariants. We show in each example a correspondence between invariants on the $\mathcal{A}$ and $\mathcal{X}$ spaces via denominator vectors. We also show evidence that there should be a basis of $\mathcal{A}$ invariants such that the cluster modular group acts on this basis by positive Laurent polynomials. This is a generalization of the Laurent phenomenon of a cluster algebra. These conjectures together are a sort of generalization of the duality conjectured in \cite{FockGonch:1}.  
We strongly suspect that the existence of invariants will always be related to some other important manifestation of the ensemble, be it geometric or number theoretic. 

\subsection{Structure of this paper}
Section \ref{sec:background} recalls the background necessary to give a concise definition of a cluster ensemble invariant. 
Section \ref{sec:examples} contains many examples of invariants for various cluster algebras. 
In Section \ref{sec:structures}, we prove some basic structural theorems about invariants for general cluster algebras. 
Section \ref{sec:geometry} introduces some ideas of hyperbolic geometry and Teichm\"uller theory into our analysis of invariants, which explains the origin of and relationships between many of the examples in section 3. 
%It is well known that the Markov cluster algebra has an interpretation in terms of the hyperbolic geometry of the once punctured torus \cite{Caroline:markov}. We briefly recall this idea in example  \ref{exmp:hypmarkov} and we apply generalizations of this idea to study new invariants.  
In section \ref{sec:affine}, we classify the invariants of algebras of affine cluster ensembles. 
And finally, section \ref{sec:conjectures} covers some conjectures about more complicated structures underlying invariants in general.

\section{Background}\label{sec:background}

We will recall the basic notions of a cluster ensemble from \cite{FockGonch:2}.  This consists of a pair of positive spaces ($\mathcal A$,$\mathcal X$) along with a map $\rho$ from $\mathcal A$ to $\mathcal X$, a notion of seeds and mutations, and a group, $\Gamma $, called the ``cluster modular group'' that acts by automorphisms of the entire structure. We will, however, greatly simplify these definitions to emphasize a more concrete and computational framework in which to introduce and study invariants. We define an action of $\Gamma$ on the ring of rational functions on each of the $\mathcal A$ and $\mathcal X$ spaces. Our notion of an invariant function will be in regards to this group action.

\subsection{Quivers}

Our first simplification is to only consider cluster ensembles of ``geometric'' type, meaning that we can use quivers to define our ensembles.

\begin{defn} 

A \emph{quiver} is a directed and weighted graph with no self loops or 2 cycles. We think of a quiver as a graphical representation of a matrix, $M$, called the \emph{exchange matrix} which has entries $[\epsilon_{ij}]$ equal to the number of arrows from node $i$ to node $j$. We denote the set of nodes of $Q$ by $N(Q)$ and we usually refer to them by their index in this set. 
\end{defn}

We will allow non-symmetrically weighted arrows between nodes, to account for non skew-symmetric exchange matrices. However, in this case we require that the exchange matrix is skew-symmetrizable, meaning that there is a diagonal matrix, $D$, such that $MD^{-1}$ is skew-symmetric. The matrix $D$ associates to each node a multiplier, $d_i$.

In the various diagrams of quivers in the paper, we label the multipliers of our nodes as superscripts and label the edges by the weights, where no label means weight 1, a double arrow means weight 2, a single label means symmetric weights and a pair of weights for an arrow from node $i$ to node $j$ is $ -\epsilon_{ji} , \epsilon_{ij}$\footnote{This ordering is meant to agree with Berhnard Keller's java applet \href{https://webusers.imj-prg.fr/~bernhard.keller/quivermutation/}{Quiver Mutation in JavaScript}.}.

Quivers will underlie the coordinate atlases of our cluster ensembles and mutations will give the transition maps between coordinate charts.

\begin{defn}
A \emph{mutation} of a quiver $Q$ at node $i$, written $\mu_i(Q)$, generates a new quiver by the following 2 operations
\begin{enumerate}
	\item For every pair of nodes $j, k$ with weighted arrows $j \xrightarrow{a ,b} i \xrightarrow{c,d} k  $,  add an arrow of weight $ac,bd$ from $j$ to $k$.
	\item Swap the direction and weights of all the arrows coming into and out of node $i$.
\end{enumerate}
\end{defn}

\begin{defn}
Two quivers are \emph{isomorphic} if there is a weighted graph isomorphism between them.
The set of all quivers up to isomorphism obtained by all possible sequences of mutations of a particular quiver, $Q$, is called the \emph{mutation class} of $Q$. A particular representative of the mutation class is sometimes referred to as the \emph{type} of the quiver. 
\end{defn}

\begin{defn}
It is useful to occasionally include nodes in a quiver that we do not allow mutations at. These nodes are called \emph{frozen} and we write $Q^\mu$ for the subquiver of $Q$ consisting of non frozen nodes.
\end{defn}

\begin{exmp}
\label{exmp:g21}
Let $Q_{\widetilde{G}_2}$ be the quiver shown in figure \ref{fig:qg2}. We read the pair of arrow weights as ``1 in , 3 out"  meaning that the matrix associated to this quiver is 
$\begin{bmatrix}
 0 & 3 & 0\\
 -1 & 0 & 1 \\
 0 & -1 & 0 
\end{bmatrix}$. We may take $D = \text{diag}(1,3,3)$ so that nodes 2 and 3 get multipliers of 3. This quiver is associated to a root system of affine type $\widetilde{G}_2$, where node 1 is associated with the larger root and nodes 2 and 3 are associated with the smaller roots. $\mu_2(Q_{\widetilde{G}_2})$ is shown in figure \ref{fig:g233mu}. The exchange matrix changes to 
$\begin{bmatrix}
 0 & -3 & 3\\
 1 & 0 & -1 \\
 -1 & 1 & 0 
\end{bmatrix}$ after mutating.

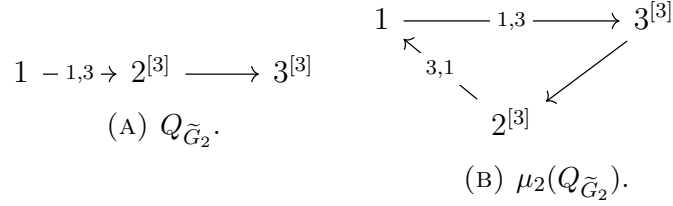
\begin{figure}[t]
    \centering
    \begin{subfigure}[]{0.3\textwidth}
        \centering
    \begin{tikzcd}
        1 \arrow[r, "{1,3}" description] & 2^{[3]} \arrow[r] & 3^{[3]}
        \end{tikzcd}
    \caption{$Q_{\widetilde{G}_2}$.}
    \label{fig:qg2}
    \end{subfigure} 
    \begin{subfigure}[]{0.3\textwidth}
    \begin{tikzcd}
         1 \arrow[rr, "{1,3}" description] &                                   & 3^{[3]} \arrow[ld] \\
                                  & 2^{[3]} \arrow[lu, "{3,1}" description] &             
    \end{tikzcd}
    \caption{$\mu_2(Q_{\widetilde{G}_2})$.}
    \label{fig:g233mu}
    \end{subfigure}
    \caption{Two quivers of type $\widetilde{G}_2$}
\end{figure}

\end{exmp}

\subsection{Positive spaces}
We will simplify the notion of positive space of \cite{FockGonch:2}.

\begin{defn}
A positive variety will simply mean a real manifold homeomorphic to $(\mathbb{R}^{>0})^k$ along with a distinguished coordinate chart. A map between positive varieties is called a \emph{positive map} and satisfies the property that pullback of the distinguished coordinate variables is a subtraction free rational function.
\end{defn}

\begin{defn}

A positive space is a collection of positive varieties along with invertible positive maps between them. 

\end{defn}

Equipped with the notions of quivers and mutations, we can define the pair $(\mathcal A_Q,\mathcal X_Q)$ of positive spaces associated to a quiver.

\subsection{The $\mathcal{A}$ space}

Let $Q$ be a quiver with mutable and frozen nodes 
\begin{equation}
    N_1, \cdots, N_n , F_{n+1}, \cdots, F_{n+f}. 
\end{equation}
Let 
\begin{equation}
    \{a_1,\cdots,a_n,a_{n+1},\cdots, a_{n+f}\}
\end{equation}
be algebraically independent generators of the function field
$\mathbb{R}(a_1,\cdots, a_{n+f})$. We associate these variables variables to the nodes of $Q$ in an obvious way. 

\begin{defn}
The pair of a quiver along with variables associated to its nodes is called an \emph{$\mathcal{A}$ seed}, and we call these variables \emph{$\mathcal{A}$ coordinates}. The $\mathcal{A}$ coordinates associated to the frozen nodes are called \emph{coefficients}. 
\end{defn}

Often, we will simply refer to a seed by the quiver underlying it, assuming that there are independent variables associated to its nodes. We write $\mathcal{A}_{|Q|}$ for the positive variety with coordinate variables given by the seed $Q$.

Mutation of $Q$ a node $i$ produces a new seed consisting of a new quiver $\mu_i(Q)$ and a new collection of variables by exchanging $a_i$ with $a_i'$ satisfying 
\begin{equation}
    a_i' = \dfrac{1}{a_i} \left(\prod_{j \xrightarrow{\alpha,\beta} i}{a_j}^{\alpha} + \prod_{i \xrightarrow{\gamma,\delta} k}{a_k}^\delta\right).
\end{equation}
Each collection of $\mathcal A$ coordinates generated in this way is called a cluster, and the $\mathbb{R}$-subalgebra of 
\begin{equation}
    \mathbb{R}(a_1,\cdots,a_n)[a_{n+1},\cdots, a_{n+f}]
\end{equation}
generated by the clusters of $\mathcal A$ coordinates obtained from all possible sequences of mutations at non-frozen nodes is the cluster algebra associated to $Q$.

Mutation of seeds provide isomorphisms of positives varieties given by 
\begin{align}
    \mu_i: &\mathcal{A}_{|Q|} \rightarrow \mathcal{A}_{|\mu_i(Q)|} \\
    \mu_i^*(a_j') = &
    \begin{cases}
     a_i^{-1} \left(\prod\limits_{j \xrightarrow{\alpha,\beta} i}{a_j}^{\alpha} + \prod\limits_{i \xrightarrow{\gamma,\delta} k}{a_k}^\delta\right) & \text{if } i=j \\
     a_j & \text{otherwise}
    \end{cases}
\end{align}

\begin{defn}

$\mathcal A_Q$ is the positive space consisting of the positive varieties associated to every seed generated by mutation of $Q$ and maps given by the mutation isomorphism for each mutation.
\end{defn}

\subsection{The $\mathcal{X}$ space}

The $\mathcal X$ space will have a definition similar to that of the $\mathcal A$ space, but with a different exchange rule. In the case that $Q$ has frozen nodes, we will not include extra variables, and we may essentially ignore these nodes in the definition. 

Given a quiver, $Q$, with mutable nodes $N_1, \cdots, N_n$, we associate independent variables $ x_1, \cdots , x_n$ called $\mathcal{X}$ coordinates. This pair is called an $\mathcal{X}$ seed. Mutation of $Q$ at node $i$ produces a new $\mathcal{X}$ seed consisting of a quiver $\mu_i(Q)$ and a new collection of $\mathcal{X}$ coordinates by the following rule:
\begin{equation}
    x'_j = 
    \begin{cases}
    x_i^{-1} & \text{if }  i=j \\
    x_j(1+x_i^{-\sgn{\epsilon_{ji}}})^{-\epsilon_{ji}} & \text{if } \epsilon_{ji}\neq 0 \\
    x_j & \text{otherwise}
    \end{cases}
\end{equation}

Again, mutation provides an isomorphism of positive varieties $\mu_i:\mathcal{X}_{|Q|} \rightarrow \mathcal{X}_{|\mu_i(Q)|} $ with $\mu_i^*(x_j')$ given by the above mutation rule.

\begin{defn}
$\mathcal X_Q$ is the positive space consisting of the positive varieties associated to every seed generated by mutation and maps given by the above mutation isomorphism for each mutation.
\end{defn}

\begin{defn}
Given an initial quiver, $Q$, we write ($\mathcal{A}_Q$,$\mathcal{X}_Q$) for the pair of positive spaces generated by using $Q$ as a seed for each space, where $\mathcal{X}_Q=\mathcal{X}_{Q^\mu}$ by abuse of notation. This pair of spaces is the \emph{cluster ensemble} associated to $Q$. The \emph{rank} of this ensemble is $\#N(Q^\mu)$. We often drop the subscripts when $Q$ is implied.
\end{defn}

\begin{rem}
The positive varieties $\mathcal{A}_{|Q|}$ and $\mathcal{X}_{|Q|}$ associated to the initial seed $Q$ provide a base point of each space. We usually consider the coordinates of these particular varieties to be the coordinate functions for the entire space. 
\end{rem}

The final ingredient of a cluster ensemble is a map $\rho: \mathcal A_Q \rightarrow \mathcal X_Q $. $\rho$ has a simple definition in terms of the initial coordinates. If $(a_1,\cdots,a_n,a_{n+1},\cdots, a_{n+f})$ and $ (x_1,\dots,x_n)$ are the initial $\mathcal A$ and $\mathcal X$ coordinates and the exchange matrix of $Q$ is $[\epsilon_{ij}]$, then we have that $\rho^*(x_i) = \displaystyle{\prod_i a_i^{\epsilon_{ij}}}$. $\rho$ commutes with mutations and provides a map of positive spaces. 

\subsection{The type of a cluster ensemble}

The combinatorial properties of a cluster ensemble are controlled by the mutation class of the quiver, $Q^\mu$, underlying its seeds. We refer a cluster ensemble associated to any quiver in the mutation class of $Q$ as a ``type $Q$'' cluster ensemble.  

The most well known example of this is the fact that a cluster ensemble has finitely many clusters if and only if there is a quiver in its mutation class that is an orientation of a classical Dynkin diagram, see \cite{Fominzel:2}. We call these cluster ensembles ``finite type'' and refer to them by their Dynkin diagram. 

We will be interested in ``affine type'' cluster ensembles. These are associated to quivers with an orientation of an affine Dynkin diagram in their mutation classes e.g. the quiver of example \ref{exmp:g21} is an orientation of a $\widetilde{G}_2$ Dynkin diagram.  These are examples of quivers with a finite mutation class, and most of the examples given in this paper will be associated with this kind of quiver. 

Several examples will be associated with the doubly extended Dynkin diagrams of \cite{Saito:1}, which constitute almost all of the exceptional finite mutation class quivers of \cite{FST:finite_mutation_via_unfoldings}.

\subsection{The cluster modular group}

\begin{defn}
The automorphism group of the mutation structures of the $\mathcal A$ and $\mathcal X$ spaces defined above is called the \emph{cluster modular group}. 
\end{defn}

This can be considered as the group of symmetries of the ``exchange complex'' of the cluster ensemble, see section 2.4 of \cite{FockGonch:2}. 

We will give a conjecturally equivalent description of it here in terms of sequences of mutations and isomorphisms of quivers. This is the same as the notion of the ``special modular group'' of section 1 of \cite{FockGonch:2}. We will not worry about the possible distinction between the special cluster modular group and the actual cluster modular group.  

Fix a seed with quiver $Q$. Let $\tilde{\Gamma}_{Q} = \big{\{}\{P,\sigma\}\big{\}}$ where $P$ is a sequence of mutations and $\sigma$ is an isomorphism $Q^\mu \rightarrow P(Q)^\mu$. Because $P$ transforms $Q$ into an isomorphic quiver, we may compose these mutation paths. We write paths of mutations as a sequence of the node names read from left to right. The action of $\sigma$ as an element of the symmetric group is written with the standard left action.

Since each mutation is an involution, we have that $\tilde{\Gamma}_{Q}$ is a subgroup of $ (\mathbb{Z}/{2\mathbb{Z}}^{*k}) \rtimes S_k $ where $\mathbb{Z}/{2\mathbb{Z}}^{*k}$ is a free product of cyclic groups and the right hand side of the semidirect product acts by permuting the factors.

\begin{rem}
\label{rem:action}
$\tilde{\Gamma}_{Q}$ acts on the elements of the clusters of $\mathcal{A}$ and $\mathcal{X}$ coordinates. $P$ provides a path to a new cluster and $\sigma$ provides a map between the initial cluster elements and the final elements.

\end{rem}

\begin{defn}
The \emph{cluster modular group}, $\Gamma_Q$ is defined to be $\tilde{\Gamma}_{Q}$ modulo the subgroup generated by elements that act trivially on the $\mathcal{X}$ coordinates.
\end{defn}

%\begin{rem}
%In the definition above, we only consider the action on the $\mathcal{X}$ coordinates. This is due to the fact that frozen variables may change  In the case where there are coefficients on the $\mathcal{A}$ space, we may find that there are cluster modular group elements of the underlying quiver with no frozen nodes where $\sigma$ cannot be extended to an isomorphism that includes the frozen nodes. This will not be an important situation in this paper. See \cite{Fraser:quasi} for details about the types of issues that arise in this situation.  
%\end{rem}

\begin{exmp}
Let's examine the cluster modular group of the type $A_2$ cluster ensemble. The quiver of the $A_2$ cluster ensemble consists of two nodes, 1 and 2, with a single arrow between them. Mutation at node 1 produces a quiver that is isomorphic to the starting quiver (after permuting the nodes). We can represent this group element by $\gamma=\{1,(12)\}$. This element clearly generates the cluster modular group. The only relation is that $\gamma^5= e$. This relation comes from the fact that 5 consecutive applications of $\gamma$ reproduces the original cluster variables. Thus the cluster modular group is $\mathbb{Z}/(5\mathbb{Z})$.

\end{exmp}
For a more complete picture, we briefly review the ``cluster modular groupoid'' associated to a cluster ensemble.

\begin{defn}

We can associate to any cluster ensemble its \emph{cluster modular groupoid}, $\mathcal G_Q$, and view its cluster modular group as the group associated to $\mathcal G_Q$. The objects of $\mathcal G_Q$ are given by the isomorphism classes of quivers in the mutation class of $Q$, and the maps are given by mutation paths. 
\end{defn}

\begin{rem}
Following the notion of positive space of \cite{FockGonch:2}, we should really be considering our positive spaces $\mathcal{A}_Q$ and $\mathcal{X}_Q$ as being functors from the groupoid $\mathcal{G}_Q$ to the category of positive varieties. 
\end{rem}

In order to give a concrete description of this groupoid, we mark each isomorphism class with a particular representative. The maps from two quiver isomorphism classes representatives, $Q_1$ and $Q_2$, are given by elements $\{P,\sigma\}$, where $P$ is a mutation path from $Q_1$ to a quiver isomorphic to $Q_2$ and $\sigma$ is an isomorphism from $Q_2$ to $P(Q_1)$. As before, any map that induces the identity on all of the cluster $\mathcal{X}$ coordinates is considered to be an identity map in $\mathcal G_Q$.

With this definition we may see that $\Gamma_Q$ is the group associated with $\mathcal{G}_Q$ and that the algebraic structures of $\mathcal{G}_Q$ and $\Gamma_Q$ only depend on the mutation class of $Q$.

\begin{exmp}

\begin{figure}[hb]
    \centering
    \begin{subfigure}[]{0.3\textwidth}
        \begin{tikzcd}
             & 1 \arrow[rd] &              \\
             3 \arrow[ru] &              & 2 \arrow[ll]
        \end{tikzcd}
        \caption{$Q_1$}
    \end{subfigure}   
    \begin{subfigure}[]{0.3\textwidth}
        \begin{tikzcd}
          & 1 \arrow[ld] &              \\
            3 &              & 2 \arrow[lu]
        \end{tikzcd}
          \caption{$Q_2$}
    \end{subfigure}
    \\
    \begin{subfigure}[]{0.3\textwidth}
       \begin{tikzcd}
              & 1 \arrow[ld] \arrow[rd] &   \\
           3 &                         & 2
        \end{tikzcd}
          \caption{$Q_3$}
    \end{subfigure}    
    \begin{subfigure}[]{0.3\textwidth}
        \begin{tikzcd}
             & 1 &              \\
              3 \arrow[ru] &   & 2 \arrow[lu]
        \end{tikzcd}
          \caption{$Q_4$}
    \end{subfigure}

    \caption{Choices of quiver isomorphism classes in the mutation class of a quiver of type $A_3$.}
    \label{fig:A3QuiverReps}
\end{figure}
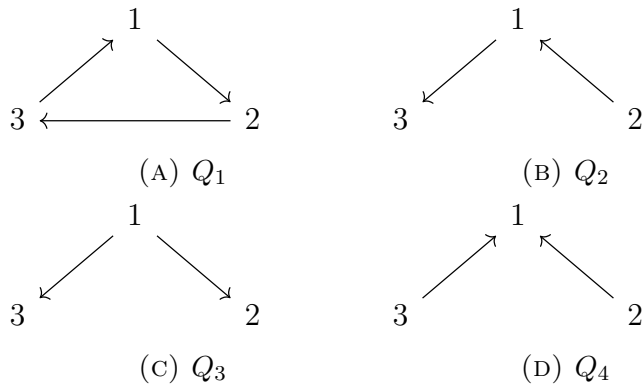

If we take a quiver of type $A_3$ as our initial seed, it is easy to compute the cluster modular groupoid and cluster modular group. 
We can do this by taking a representative for each of the quivers in the mutation class of our seed and then writing all of the possible pairs of mutation paths and quiver isomorphisms between them. A choice of possible representatives for the quiver isomorphism classes is shown in figure \ref{fig:A3QuiverReps} and a diagram of the groupoid is shown in figure \ref{fig:A3groupoid}. It is not too difficult to compute the cluster modular group by checking that every path from $Q_1$ to itself is generated by the two paths shown in the figure. These two elements commute and the top has order 2 and the bottom has order 3. Thus the cluster modular group is $\mathbb{Z}/6\mathbb{Z}$

\begin{figure}[ht]
    \centering
    \begin{tikzcd}[sep = huge]
                                                                                                            &                                                                           & Q_3 \arrow[dd,leftrightarrow, "{\{1,()\}}" description] \arrow[ld, "{\{3,(23)\}}" description, bend right=49] \\
Q_1 \arrow[r,leftrightarrow, "{\{1,()\}}" description] \arrow["{\{<>,(123)\}}",swap , loop, distance=4em, in=305, out=235] \arrow["{\{1231,(23)\}}" , loop, distance=4em, in=55, out=125] & Q_2 \arrow[ru,leftrightarrow, "{\{2,()\}}" description] \arrow[rd,leftrightarrow, "{\{3,()\}}" description] &                                                                                            \\
                                                                                                            &                                                                           & Q_4 \arrow["{\{<>,(23)\}}", swap,loop right, in=305, out=235,distance = 4em]                  
    \end{tikzcd}
    \caption{The cluster modular groupoid of the $A_3$ cluster ensemble. A selection of non-identity maps are shown.}
    \label{fig:A3groupoid}
\end{figure}
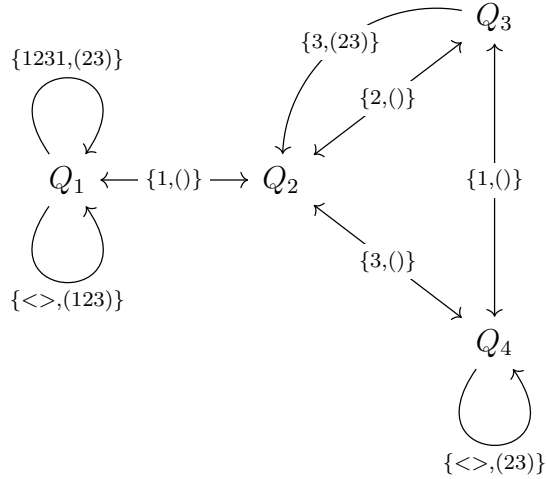

\end{exmp}

\subsection{Action of the cluster modular group on functions}

We write, by abuse of notation, the field of rational functions on a positive space, $\mathbb{R}(\mathcal{A}_Q)$, to mean $\mathbb{R}(\mathcal{A}_{|Q|})$ and similarly for $\mathbb{R}(\mathcal{X}_Q)$.

Again fix a starting seed with quiver $Q$. We can now define an action of $\mathcal{G}_Q$ on the set of rational functions on the $\mathcal A_Q$ or $\mathcal X_Q$ space. The cluster coordinates of our seed give a basis of the field of rational functions on the $\mathcal A_Q$ and $\mathcal X_Q$ spaces. 

Given $f \in \mathbb{R}(\mathcal A_{Q})$ we can define $\gamma(f)(a_1,\dots,a_n) = \sigma^{}(P^{*}(f))$,  where $P$ acts by pullback along $P: \mathcal{A}_{|Q|} \rightarrow \mathcal{A}_{|P(Q)|}$, and $\sigma$ acts as a map from the cluster variables on the seed $P(Q)$ back to the initial seed $Q$. We define an action on elements of $\mathbb{R}(\mathcal X_{Q})$ in an analogous way. 

\begin{defn}
We call the action of $\mathcal{G}$ on functions \emph{functional exchanges} and we call the set $\Gamma_Q(f) $ the \emph{exchange class} of $f$. The set $\mathcal{G}(f)$ provides a set of exchange classes for each quiver isomorphism class in the mutation class of $Q$. 
\end{defn}

\begin{exmp}
\label{exmp:a3function}
Let $Q = Q_1$ of figure \ref{fig:A3QuiverReps}. Let $f \in \mathbb{R}(\mathcal{A}_Q)$ be given by
\begin{equation}
    f(a_1,a_2,a_3) = \frac{a_1+a_2+a_3}{a_1a_2a_3}.
\end{equation}
Then we may compute that $\{1231,(23)\}(f) = f^{-1}$.
Let us write this out in detail. We write $a_i,b_i,c_i,d_i,e_i$ for the variables on each of the five seeds found along the path. The important exchange relations are
\begin{align}
    \mu_1^*(b_1) &= \frac{a_2+a_3}{a_1} &
    \mu_2^*(c_1) &= \frac{1+b_1}{b_2} \\
    \mu_3^*(d_3) &= \frac{1+c_1}{c_2} &
    \mu_1^*(e_1) &= \frac{d_2+d_3}{d_1}
\end{align}
and all other exchange relations are trivial. 

The isomorphism $(23)$ provides the map $\{a_1,a_3,a_2\} \xrightarrow{(23)} \{e_1,e_2,e_3\}$ and the mutation path acts on $(23)(f)$ by pullback. Writing the value of $f$ by abuse of notation, we compute that
\begin{align*}
   \{1231,(23)\}(f) = & \mu_1 \circ \mu_2 \circ \mu_3  \circ \mu_1 \left((23)(\frac{a_1+a_2+a_3}{a_1a_2a_3})\right) =  \\
    & \mu_1 \circ \mu_2 \circ \mu_3  \circ \mu_1 \left (\frac{e_1+e_3+e_2}{e_1e_3e_2} \right) = 
     \mu_1 \circ \mu_2 \circ \mu_3 \left (\frac{d_1+1}{d_2d_3}\right) = \\
    & \mu_1 \circ \mu_2 \left (\frac{c_2}{c_3}\right) = 
     \mu_1\left(\frac{b_2b_3}{b_1+1}\right) = \frac{a_1a_2a_3}{a_1+a_2+a_3} =  f^{-1}.
\end{align*}

We clearly have that $\{<>,(123)\}(f) = f$. Therefore, the cluster modular group $\Gamma_Q = \mathbb{Z}/6\mathbb{Z}$ acts on $f$ via the quotient group $\mathbb{Z}/2\mathbb{Z}$.

\end{exmp}

\subsection{Invariants of the cluster modular group}

\begin{defn} 
Let $\Gamma_\circ \subset \Gamma_Q$ be a subgroup. An invariant function for $\Gamma_\circ$ on  $\mathcal A_Q$  or $\mathcal X_Q$ is an element of the field of invariants $\mathbb{R}(\mathcal A_Q)^{\Gamma_\circ}$ or $\mathbb{R}(\mathcal X_Q)^{\Gamma_\circ}$ respectively. 
\end{defn}

\section{Examples}\label{sec:examples}

\begin{exmp}
\label{exmp:a11}

Let $Q$ be a quiver of type $\widetilde{A}_1$ shown in figure \ref{fig:a1affine}. The cluster modular group, $\Gamma_Q$, is isomorphic to $\mathbb{Z}$ generated by $\{1,(12)\}$. It is not too difficult to write an invariant for $\Gamma_Q$ on $\mathcal A_Q$. The function 
\begin{equation}
F(a_1,a_2)= \dfrac{1+a_1^2+a_2^2}{a_1a_2}
\end{equation}
is invariant. We also have an invariant for this group on $\mathcal X_Q$ : 
\begin{equation}
G(x_1,x_2)= \dfrac{(x_2(x_1+1)+1)^2}{x_1x_2}
\end{equation}
It is easy to check that $\rho^*(G)=F^2$

We can use this invariant to study the limiting behavior of the cluster variables for each of the $\mathcal A$ and $\mathcal X$ spaces under the action of $\Gamma$. We find that the $\mathcal A$ coordinates behave like a geometric sequence after many mutations. Let $(a_1,a_2)$ be the initial cluster and $(a_2,a_3),(a_3,a_4), \dots (a_{n},a_{n+1})$ be the subsequent clusters obtained after applying  $\gamma = \{1,(12)\}$ $n-1$ times. Then we have that $ F(a_1,a_2) = F(a_n,a_{a+1})$. Call this expression $F$ for simplicity. Using the exchange relation 
\begin{equation}
    a_{n-1}=\frac{a_n^2+1}{a_{n+1}}
\end{equation}
 we can see using the fact that $  F(a_n,a_{n+1}) = F $ that 
 \begin{equation}
    a_{n-1}+a_{n+1} = Fa_n 
 \end{equation}
 Thus the sequence of ${a_i}$ satisfies a linear recurrence and we must have that $\displaystyle{\lim_{i\rightarrow \infty}} \frac{a_{i+1}}{a_i} = \lambda$ for some multiplier $\lambda$. It is easy to compute that
 \begin{equation}
     \lambda = \frac{F+\sqrt{F^2-4}}{2} = \exp(\arccosh(F/2))
 \end{equation}
 
This expression for $\lambda$ is vital for the analysis of the limiting behavior of the Markov numbers done in \cite{Zagier:Markoff}. We will see in example \ref{exmp:hypa11} that the appearance of $\arccosh$ is related to the hyperbolic geometry of an annulus.  

\begin{figure}[t]
    \centering
    \begin{tikzcd}
1 \arrow[r, Rightarrow] & 2
\end{tikzcd}
    \caption{Quiver of type $\widetilde{A}_1$. }
    \label{fig:a1affine}
\end{figure}
 
\end{exmp}

\begin{exmp} 
\label{exmp:markov}
Let $(\mathcal A_Q,\mathcal X_Q)$ be the cluster ensemble with trivial coefficients associated with the Markov quiver, $Q$, of figure \ref{fig:markovQuiver}. The function 
\begin{equation}
     F(a_1,a_2,a_3)= \dfrac{a_1^2+a_2^2+a_3^2}{a_1a_2a_3} 
\end{equation}
is an invariant function for all of $\Gamma_Q =  \PSL(2,\mathbb{Z})$ on $\mathcal{A}_Q$. The function 
\begin{equation}
    G(x_1,x_2,x_3)= x_1x_2x_3
\end{equation}
is an invariant for $\Gamma_Q$ on $\mathcal X_Q$. 

The function $F$ encapsulates the Diophantine properties of the Markov numbers. The Markov numbers are generated by evaluating the $\mathcal{A}$ coordinates at $(1,1,1)$. We write these as triples of integers $(x,y,z)$. Since $ F(1,1,1) = 3  $, we have that the Markov numbers all satisfy $ x^2+y^2+z^2 - 3xyz = 0 $. 

If we freeze any one of the nodes in $Q$, then remaining mutable portion is a quiver of type $\widetilde{A}_1$. We can use the invariant in the exact same way as example \ref{exmp:a11} to study the limiting behavior of mutations on this modified quiver. In this way we recover similar analysis of \cite{Zagier:Markoff}.

\end{exmp}
There are 2 other quivers with  similar properties to the Markov quiver. All 3 of these quivers have 3 nodes and are associated doubly extended Dynkin diagrams\footnote{ See \cite{Saito:1} for background on the notation used in this paper. We only use this naming convention to emphasise a deeper connection between these examples.}. We will examine the invariant functions associated to cluster ensembles with trivial coefficients of these types.
\begin{exmp}
\label{exmp:bc21}
Let $Q$ be a quiver of type $ BC_1^{(2,1)} $
Then the function 
\begin{equation}
F(a_1,a_2,a_3)= \dfrac{a_1^4+(a_2+a_3)^2}{a_1^2a_2a_3} 
\end{equation}
is an element of $\mathbb{R}(\mathcal A_Q)^{\Gamma_Q}$.

\end{exmp}
\begin{figure}[t]
\begin{center}
\begin{subfigure}{0.29\textwidth}
\centering 
\begin{tikzcd}[ampersand replacement=\&,sep=scriptsize]
                               \& 1 \arrow[rdd, Rightarrow] \&                               \\
                               \&                                \&                               \\
3 \arrow[ruu, Rightarrow] \&                                \& 2 \arrow[ll, Rightarrow]
\end{tikzcd}
\caption{The Markov quiver, $A_1^{(1,1)}$.}
\label{fig:markovQuiver}
\end{subfigure}
\begin{subfigure}{0.29\textwidth}
\centering
\begin{tikzcd}[ampersand replacement=\&,sep=scriptsize]
                               \& 1^{[4]} \arrow[rdd, "{4,1}" description] \&                               \\
                               \&                                \&                               \\
3 \arrow[ruu, "{1,4}" description] \&                                \& 2 \arrow[ll, Rightarrow]
\end{tikzcd}
\caption{$ BC_1^{(2,1)} $}
\end{subfigure}
\begin{subfigure}{0.29\textwidth}
\centering
\begin{tikzcd}[ampersand replacement=\&,sep=scriptsize]
                               \& 1 \arrow[rdd, "{1,4}" description] \&                               \\
                               \&                                \&                               \\
3^{[4]} \arrow[ruu, "{4,1}" description] \&                                \& 2^{[4]} \arrow[ll, Rightarrow]
\end{tikzcd}
\caption{$ BC_1^{(2,4)} $}
\end{subfigure}
\end{center}
\caption{Quivers associated with doubly extended Dynkin diagrams with 3 nodes.}
\label{fig:rank1EllipticQuivers}
\end{figure}
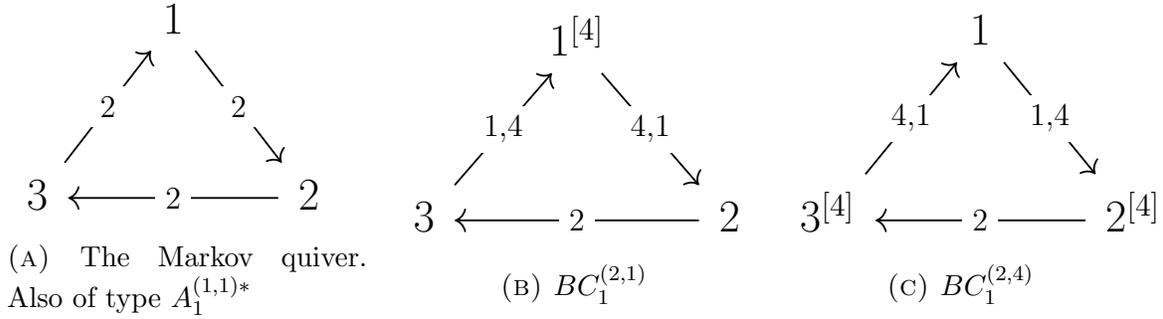

\begin{exmp} \label{exmp:bc24}
Let $Q$ be a quiver of type $BC_1^{(2,4)}$
The function 
\begin{equation}
F(a_1,a_2,a_3)= \dfrac{a_1^2+2a_1(a_2^2+a_3^2)+a_2^4+a_3^4}{a_1a_2^2a_3^2} 
\end{equation}
is an element of $\mathbb{R}(\mathcal A_Q)^{\Gamma_Q}$. This function and its Diophantine properties were studied in \cite{Lampe}.

\end{exmp}

\begin{exmp} \label{exmp:somos}

\begin{figure}[b]
    \centering
    \begin{subfigure}[t!]{0.3\textwidth}
    \centering
    \begin{tikzcd}[sep = 4em]
1 \arrow[rd, Rightarrow] & 2 \arrow[l] \arrow[ld, Rightarrow] \\
4 \arrow[u] \arrow[r]    & 3 \arrow[u, "3" description]      
\end{tikzcd}
    
    \caption{$Q_{s4}$}
    \end{subfigure}
    \begin{subfigure}[t!]{0.3\textwidth}
    \centering
    \begin{tikzcd}
                          & 3 \arrow[rdd] \arrow[ld, Rightarrow] &                          \\
2 \arrow[d] \arrow[rrd]   &                                      & 4 \arrow[lu, Rightarrow] \\
1 \arrow[ruu] \arrow[rru] &                                      & 5 \arrow[ll] \arrow[u]  
\end{tikzcd}
     \caption{$Q_{s5}$}
    \end{subfigure}
    \begin{subfigure}{0.3\textwidth}
    \centering
    \begin{tikzcd}[column sep = 1.4em]
                                     & 3 \arrow[r] \arrow[rrd, Rightarrow] & 4 \arrow[rd] \arrow[ldd, Rightarrow] &                                      \\
2 \arrow[ru] \arrow[rru, Rightarrow] &                                     &                                      & 5 \arrow[ld] \arrow[lll, Rightarrow] \\
                                     & 1 \arrow[lu] \arrow[r]              & 6 \arrow[luu, Rightarrow]            &                                     
\end{tikzcd}
    \caption{$Q_{s6}$}
    \end{subfigure}
    \label{fig:somos}
    \caption{Quivers for the Somos 4, 5 and 6 sequences.}
\end{figure}
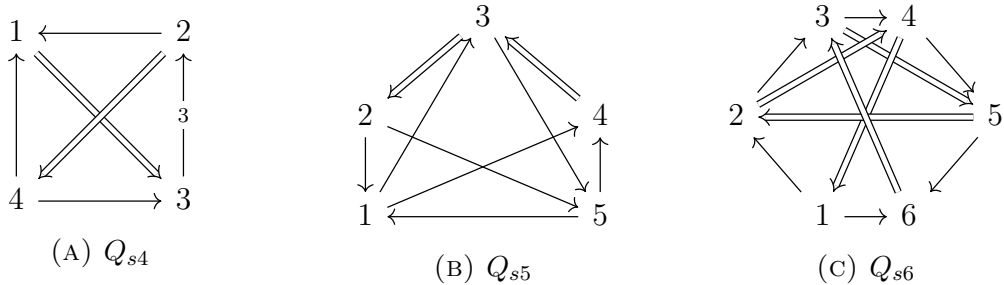

The Somos 4, 5 and 6 sequences can be associated to cluster algebras in a simple way\footnote{ This Somos 6 sequence is not the most famous instance, see \cite{lampe:descrete_integrability} for a discussion of the example shown here.}. If we look at the variables on the $\mathcal A$ space generated by starting with quivers $Q_{s4}$ , $Q_{s5}$ or $Q_{s6}$ and following the mutation paths $\gamma_4= \{1,(1234)\}$ , $\gamma_5 = \{1,(12345)\}$ or $\gamma_6 = \{1,(123456)\}$, we obtain the respective Somos sequences by evaluating all of the initial cluster variables at 1. There are invariant functions on each of these cluster ensembles for these mutation paths. The functions 
\begin{equation}
    F_4(a_1,a_2,a_3,a_4) = \dfrac{a_1^2a_4^2+a_1a_3^3+a_4a_2^3+a_2^2a_3^2}{a_1a_2a_3a_4}
\end{equation}
\begin{equation}
    F_5(a_1,a_2,a_3,a_4,a_5) = \dfrac{a_1^2a_4^2a_5+a_1a_2^2a_5^2+a_1a_3^2a_4^2+a_2^2a_3^2a_5+a_2a_3^3a_4}{a_1a_2a_3a_4a_5}
\end{equation}
and
\begin{equation}
\begin{split}
    & F_6(a_1,a_2,a_3,a_4,a_5,a_6) =  \\ &\frac{a_1^2a_2a_5a_6^2+a_1^2a_4a_5^3+a_2^3a_3a_6^2+a_1a_3^2a_4a_5^2+a_2^2a_3a_4^2a_6}{a_1a_2a_3a_4a_5a_6}  \\
    &+\frac{a_1a_3a_4^3a_5+a_2a_3^3a_4a_6+a_3^3a_4^3}{a_1a_2a_3a_4a_5a_6}
\end{split}
\end{equation}

are elements of $\mathbb{R}(\mathcal A_{Q_{s4}})^{<\gamma_4>}$ , $\mathbb{R}(\mathcal A_{Q_{s5}})^{<\gamma_5>}$ and $\mathbb{R}(\mathcal A_{Q_{s6}})^{<\gamma_6>}$ respectively.

The functions $F_4$ and $F_5$ have appeared in \cite{Somos} in relation to the number theoretic properties of the Somos sequences e.g. one may compute the $j-$invariant of an elliptic curve associated with the sequences using the values of these functions. The function $F_6$ has not been referenced to our knowledge, and we strongly suspect that there are similar functions for more general Somos-like integer sequences.  

\end{exmp}

In all of the above examples, the associated quivers all had only one relevant quiver isomorphism class. This meant that the cluster modular group could be generated by paths with only one mutation and it is relatively simple to check that a function is invariant. The next examples show that we can use our generalized notions to discuss invariants for more complicated subgroups of the cluster modular group. 

\begin{exmp}

\begin{figure}[t]
    \centering
    \begin{tikzcd}
             & 3 \arrow[r]  & 4 \arrow[rd] &                      \\
    2 \arrow[ru] &              &              & 5 \arrow[ld, dashed] \\
             & 1 \arrow[lu] & n \arrow[l]  &                     
    \end{tikzcd}
    \caption{$Q_{n-cycle}$. This quiver is simply an $n$ cycle of nodes and arrows.}
    \label{fig:D_Ncycle}
\end{figure}
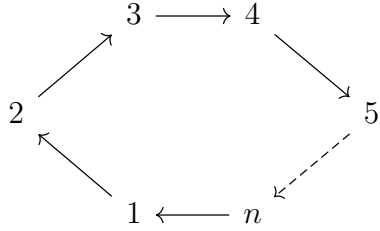

The mutation class of a quiver of type $D_n$ has an element that looks like a directed $n$-cycle, as shown in figure \ref{fig:D_Ncycle}. Let $Q$ be this quiver. The cluster modular group for $Q$ is $\mathbb{Z}/n\mathbb{Z}\times\mathbb{Z}/2\mathbb{Z}$ generated by $r$ and $t$ in each of the factors\footnote{This is not quite true when $n=4$, but the given function is still an invariant.}. The function
\begin{equation}
    F(a_1,a_2,\dots,a_n)= \dfrac{1}{a_1a_2}+\dfrac{1}{a_2a_3}+\dots+\dfrac{1}{a_na_1}
\end{equation}
is an element of $\mathbb{R}(\mathcal{A}_Q)^{<r>}$ and satisfies $t(F)=F^{-1}$. This is a generalization of example \ref{exmp:a3function}. 

\end{exmp}

We will examine some examples of invariants for ensembles associated to more doubly extended Dynkin diagrams with more than 3 nodes. 

\begin{exmp}
\label{G233}

\begin{figure}[b]
    \centering

    \begin{tikzcd}
                                  & 1^{[3]} \arrow[dd, "2" description]                &              \\
    4 \arrow[ru, "{1,3}" description] &                                              & 3^{[3]} \arrow[lu] \\
                                  & 2^{[3]} \arrow[ru] \arrow[lu, "{3,1}" description] &             
\end{tikzcd}
    \caption{Quiver of type $G_2^{(3,3)}$. }
    \label{fig:g233}
\end{figure}
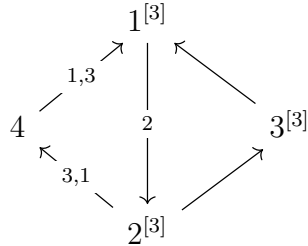

A quiver of type $G_2^{(3,3)}$ has two quiver isomorphism classes. Let $Q$ be the isomorphism class shown in figure \ref{fig:g233}. We can see that the element $\gamma = \{1,(12)\}$ is in $\Gamma_Q$. It is not hard to compute that 
\begin{equation}
    1 \rightarrow N(\gamma) \rightarrow \Gamma_Q \rightarrow D_{12} \rightarrow 1
\end{equation}
where $N(\gamma)$ is the normalizer of the element $\gamma$ and $D_{12}$ is the dihedral group with 12 elements. 
The functions 
\begin{equation}
    F_1(a_1,a_2,a_3,a_4) = \dfrac{(a_1^3+a_2^3)(a_1+a_2)+a_3a_4(2a_1^2+a_1a_2+2a_2^2)+a_3^2a_4^2}{a_1^2a_2^2a_3a_4}
\end{equation}
and
\begin{equation}
    F_2(a_1,a_2,a_3,a_4) = \dfrac{(a_1+a_2)^2+a_3a_4}{a_1a_2a_3^2}
\end{equation}
are elements of $\mathbb{R}(\mathcal{A}_{Q})^{N(\gamma)}$.

\end{exmp}

\begin{exmp}
\label{exmp:d411:1}

A $ D_4^{(1,1)} $ type quiver has 4 mutation classes, call them $ Q_1, \dots, Q_4 $, see figure \ref{fig:D4-11_quivers}. Let $Q_4$ be our seed. The cluster modular group can be written as an extension 
\begin{equation}
    1 \rightarrow \mathbb{Z}*\mathbb{Z} \rightarrow \Gamma_{Q_4} \rightarrow \Aut(F_4)^+ \rightarrow 1
\end{equation}
where $W(F_4)^+$ is the orientation preserving part of the automorphism group of the $F_4$ root system of order 1152\footnote{If we allowed arrow reversing quiver isomorphisms, we would get all of $\Aut(F_4)$, see \cite{kauf:Modular_groups}.}. We will see in example \ref{exmp:4punctured} why $\mathbb{Z}*\mathbb{Z}$ is a normal subgroup. $\Aut(F_4)^+$ is generated by the elements of $\Aut(Q_4)$ and the path $\{1231,(23)\}$.

An element of $\mathbb{R}(\mathcal{A}_{Q_4})^{\mathbb{Z}*\mathbb{Z}}$ is 
\begin{equation}
F(a_1,a_2,a_3,a_4,a_5,a_6)= \dfrac{(a_1a_4+a_2a_5+a_3a_6)^2}{a_1a_2a_3a_4a_5a_6} 
\end{equation}

This is not the only function in its exchange class, that is to say that $W(F_4)^+$ acts non trivially on this function. After applying the element $\{1231,(23)\}$, we obtain the function
\begin{equation}
 F_{456}(a_1,a_2,a_3,a_4,a_5,a_6)= \dfrac{a_1a_4+a_2a_5+a_3a_6}{a_4a_5a_6} 
\end{equation}

There are 24 different functions in the exchange class of $F$. They are 
\begin{equation}
    F,\quad F^{-1}, \quad F_{456},F_{456}^{-1}, \quad \dfrac{a_1}{a_4}
\end{equation}
along with each of their images under the automorphism group of $Q_4$.

There is a relationship with these functions and the invariant of the Markov quiver. If we fold (see \cite{FST:finite_mutation_via_unfoldings} section 4 for a background on folding) $Q_4$ by associating nodes 1 and 4, 2 and 5, 3 and 6, we obtain a  quiver of type $A_1^{(1,1)}$.  Under this folding, each of these functions becomes an invariant function for a cluster algebra of type $A_1^{(1,1)}$. In other words, if we set $a_1=a_4 ,a_2=a_5, a_3=a_6$ then our functions become either $ 1, F'$ or $F'^2$, where $F'$ is the invariant of example \ref{exmp:markov}. 

There is also a relationship between these invariants and the invariants of the $BC_1^{(2,1)}$ ensemble of example \ref{exmp:bc21}. We can fold $Q_1$ to obtain a quiver of type  $BC_1^{(2,1)}$ by associating nodes 2, 3, 5, and 6. We can find invariants for $\mathcal{A}_{Q_1}$ by pulling our set of invariants on $\mathcal{A}_{Q_4}$ back along a mutation path between these two quivers. In doing so, we obtain the functions
\begin{equation}
    F_a = \dfrac{(a_1+a_4)^2+a_2a_3a_5a_6}{a_1a_4a_2a_5}, \quad
    F_a^{-1}, \quad
    F_b= \dfrac{a_2}{a_3} 
\end{equation}
and each of their images under the action of $\Aut(Q_1)$, as elements of $\mathbb{R}(\mathcal{A}_{Q_1})^{\mathbb{Z}*\mathbb{Z}}$. Each of these clearly folds to an invariant of the $BC_1^{(2,1)}$ algebra by setting $a_2=a_3=a_5=a_6$.

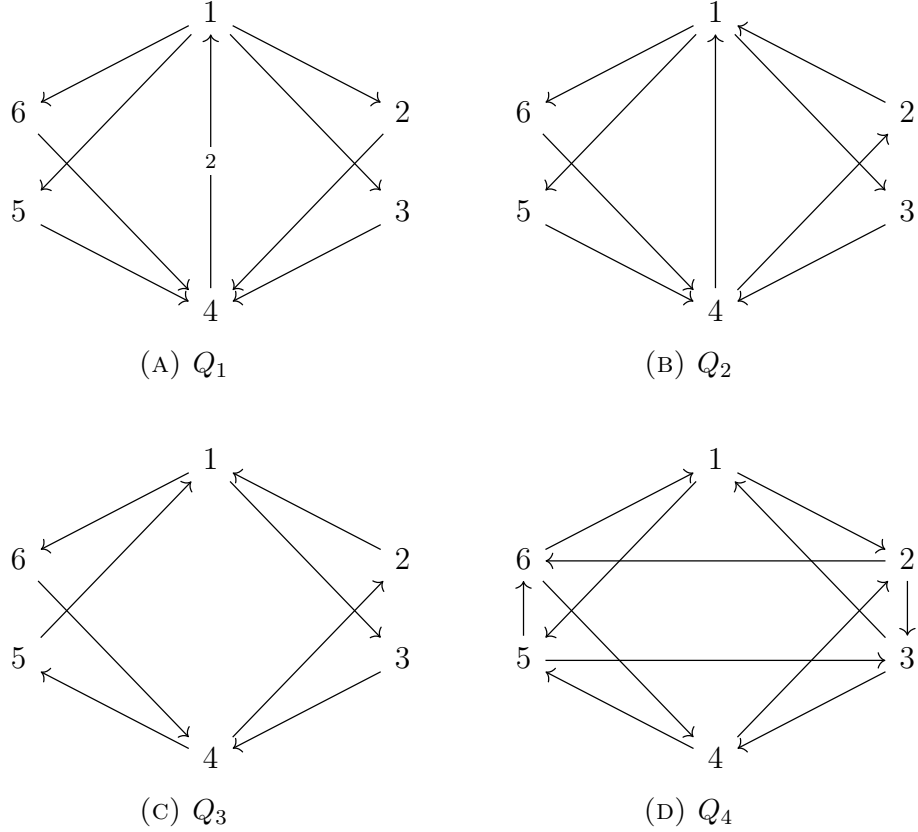
\begin{figure}[t]
\centering
\begin{subfigure}{0.49\textwidth}
\centering
\begin{tikzcd}[ampersand replacement=\&]
               \&  \& 1 \arrow[rrd] \arrow[rrdd] \arrow[lld] \arrow[lldd] \&  \&                \\
6 \arrow[rrdd] \&  \&                                                     \&  \& 2 \arrow[lldd] \\
5 \arrow[rrd]  \&  \&                                                     \&  \& 3 \arrow[lld]  \\
               \&  \& 4 \arrow[uuu, "2" description]                      \&  \&               
\end{tikzcd}
\caption{$Q_1$}
\end{subfigure}
\begin{subfigure}{0.49\textwidth}
\centering
\begin{tikzcd}[ampersand replacement=\&]
               \&  \& 1 \arrow[rrdd] \arrow[lldd] \arrow[lld] \&  \&               \\
6 \arrow[rrdd] \&  \&                                         \&  \& 2 \arrow[llu] \\
5 \arrow[rrd]  \&  \&                                         \&  \& 3 \arrow[lld] \\
               \&  \& 4 \arrow[uuu] \arrow[rruu]              \&  \&              
\end{tikzcd}
\caption{$Q_2$}
\end{subfigure}

\begin{subfigure}{0.49\textwidth}
\centering
\begin{tikzcd}[ampersand replacement=\&]
               \&  \& 1 \arrow[rrdd] \arrow[lld] \&  \&               \\
6 \arrow[rrdd] \&  \&                            \&  \& 2 \arrow[llu] \\
5 \arrow[rruu] \&  \&                            \&  \& 3 \arrow[lld] \\
               \&  \& 4 \arrow[rruu] \arrow[llu] \&  \&              
\end{tikzcd}
\caption{$Q_3$}
\end{subfigure}
\begin{subfigure}{0.49\textwidth}
\centering
\begin{tikzcd}[ampersand replacement=\&]
                           \&  \& 1 \arrow[rrd] \arrow[lldd] \&  \&                            \\
6 \arrow[rrdd] \arrow[rru] \&  \&                            \&  \& 2 \arrow[d] \arrow[llll]   \\
5 \arrow[u] \arrow[rrrr]   \&  \&                            \&  \& 3 \arrow[lld] \arrow[lluu] \\
                           \&  \& 4 \arrow[rruu] \arrow[llu] \&  \&                           
\end{tikzcd}
\caption{$Q_4$}
\end{subfigure}
\caption{Quivers of type $ D_4^{(1,1)} $.}
\label{fig:D4-11_quivers}
\end{figure}

\end{exmp}

\begin{exmp}
\label{d411:2}
The Weyl group, $W(D_4) = \mathbb{Z}_2^3 \rtimes S_4$, is a normal subgroup of the modular group of the $D_4^{(1,1)}$ cluster ensemble. For a simple presentation for this, take $Q_1$ as our initial quiver and look at the subgroup generated by the $\Aut(Q_1)=S_4$ and the path $\{214,(142)\}*\{314,(143)\}^{-1}$. This generates $W(D_4)$ as a subgroup of $\mathbb{Z}_2^4 \rtimes S_4$ generated by the elements of $S_4$ and the element $((1,-1,0,0),id)$. We have an exact sequence:
\begin{equation}
    1 \rightarrow W(D_4) \rightarrow \Gamma \rightarrow \PSL(2,\mathbb{Z}) \rightarrow 1
\end{equation}
Consider $Q_4$ as our seed. The function 
\begin{equation}
    F(a_1,a_2,a_3,a_4,a_5,a_6)= \dfrac{(a_1a_4+a_2a_5+a_3a_6)^3}{a_1a_2a_3a_4a_5a_6}
\end{equation}
is an element of $\mathbb{R}(\mathcal A_{Q_4})^{W(D_4)}$.  
\end{exmp}

\section{Basic properties}\label{sec:structures}
We will compile some elementary facts about invariants that will be useful later. 

\subsection{Trivial Invariants}

Up until now, we have not considered any quivers that have frozen variables, and the associated $\mathcal{A}$ spaces were free of coefficients.  

Since it respects mutations, the map $\rho: \mathcal{A}_Q \rightarrow \mathcal{X}_Q$ gives a map 
\begin{equation}
    \rho^*: \mathbb{R}(\mathcal{X}_Q)^{\Gamma_\circ} \cup \{\infty\} \rightarrow \mathbb{R}(\mathcal{A}_Q)^{\Gamma_\circ} \cup \{\infty\}
\end{equation}
by pullback. This is very useful for studying the invariants on the $\mathcal A$ space when there are nontrivial coefficients. However, since $\rho$ is not surjective or injective in general, we cannot say much about the image of the set of $\mathcal X$ invariants under $\rho^*$. 

\begin{defn}
A rational function in the coefficients of $\mathcal A_Q$ is called a \emph{trivial} invariant.
\end{defn}

These functions are clearly unchanged by the action of cluster modular group elements.

We can understand what functions pull-back to trivial $\mathcal A$ invariants via $\rho^*$ as follows: any vector, $v= (v_1,\dots,v_n)$  of the kernel of the exchange matrix gives such a function since $\rho^*(\displaystyle{\prod{x_i^{v_i}}})$ is a function just of frozen variables, see section 2.3 of \cite{FockGonch:2}. These functions are called ``Casimir'' functions.

\begin{defn}
A $\mathcal X$ invariant that is also a Casimir function will be called a Casimir $\mathcal X$ invariant. 
\end{defn}

\begin{exmp}
The type $A_n$ cluster ensembles have an interesting trivial $\mathcal X$ invariant when $n$ is odd.  The cluster modular group of the type $A_n$ cluster ensemble is $\mathbb{Z}/(n+3)\mathbb{Z}$, generated by $\gamma$. Take $Q$ to be the quiver in figure \ref{fig:An_Odd} and let $G(x_1,\dots,x_{2n+1})= x_1x_3 \dots x_{2n+1}$. Then $G$ is a Casimir invariant in $\mathbb{R}({\mathcal X}_Q)^{\mathbb{Z}/m\mathbb{Z}}$, where $m=\frac{n+3}{2}$. $G$ satisfies $\gamma(G) = G^{-1}$.

\begin{figure}[hb]
    \centering
    \begin{tikzcd}
        1 \arrow[r] & 2 \arrow[r] & ... \arrow[r] & 2n+1
    \end{tikzcd}
    \caption{Quiver of type $A_{2n+1}$.}
    \label{fig:An_Odd}
\end{figure}

\end{exmp}

\subsection{Invariants of Subensembles} 
The invariants of the ${\mathcal A}$ space and $\mathcal X$ space of an ensemble respect inclusions of ``subensembles'' in different ways.

\begin{defn}

Let $R^\mu \subset Q^\mu$ be a subquiver of the mutable portion of $Q$. Let $R$ be the quiver obtained from $Q$ by freezing all the nodes in $N(Q)-N(R^\mu)$. We consider $({\mathcal A}_R,{\mathcal X}_R) $  to be a \emph{subensemble} of $({\mathcal A}_Q,{\mathcal X}_Q)$. 

\end{defn}

Let $\gamma_R = \{P_R,\sigma_R\}$ be an element of the cluster modular group of $R$ that extends to an element of the modular group of $Q$, i.e. there exists $\gamma_Q = \{P_R,\sigma_Q\}$ where $\sigma_Q|_{R}=\sigma_R$.
We can now state the following pair of theorems:

\begin{thm} \label{thm:adims}
$\mathbb{R}(\mathcal{A}_Q)^{<\gamma_Q>}$ is exactly $\mathbb{R}(\mathcal{A}_R)^{<\gamma_R>}$. Furthermore, evaluating the variables associated to the nodes in $N(Q)-N(R^\mu)$ at 1 gives a map of sets $\mathbb{R}(\mathcal{A}_Q)^{<\gamma_Q>}\cup\{\infty\} \rightarrow \mathbb{R}(\mathcal{A}_{R^\mu})^{<\gamma_R>}\cup\{\infty\}$
\end{thm}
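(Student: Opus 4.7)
Both parts of the theorem pivot on the observation that the mutation formula at a node $i \in N(R^\mu)$ depends only on the arrows incident to $i$ in the ambient quiver, and freezing the nodes of $N(Q^\mu) - N(R^\mu)$ alters neither those arrows nor the variables $a_j$. I would organize the proof into three steps.

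\emph{Identifying the ambient fields.} First I would observe that $\mathbb{R}(\mathcal{A}_Q)$ and $\mathbb{R}(\mathcal{A}_R)$ are literally the same rational function field $\mathbb{R}(a_1,\ldots,a_{n+f})$, since freezing removes a node from the list of mutable indices but alters neither the set of variables nor the quiver's arrows. The first assertion is therefore an equality of two fixed subfields inside one common field. To establish it I would verify that $\gamma_Q$ and $\gamma_R$ induce identical substitutions on that field: for each $\mu_i$ appearing in $P_R$ the exchange rule reads off arrows incident to $i \in N(R^\mu)$, which coincide in $Q$ and in $R$, while variables indexed outside $N(R^\mu)$ are never touched. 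Combined with the hypothesis $\sigma_Q|_R = \sigma_R$ and the canonical extension of $\sigma_Q$ by the identity on $N(Q) - N(R^\mu)$, this forces the two actions to agree on every generator, and the equality of invariant subfields follows immediately.

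\emph{The evaluation map.} For the second assertion let $\varepsilon$ be the $\mathbb{R}$-algebra homomorphism sending $a_j \mapsto 1$ for every $j \in N(Q) - N(R^\mu)$ and acting as the identity on the remaining variables, extended to $\mathbb{R}(\mathcal{A}_R) \cup \{\infty\}$ in the usual way. Let $\gamma_{R^\mu} = \{P_R,\sigma_R\}$ denote the corresponding element in the modular group of the standalone quiver $R^\mu$; its exchange formulas are obtained from those of $\gamma_R$ by erasing exactly the monomial factors whose index lies outside $N(R^\mu)$, which is precisely the effect of $\varepsilon$. Consequently $\varepsilon \circ \gamma_R = \gamma_{R^\mu} \circ \varepsilon$, so $\varepsilon$ sends $\gamma_R$-invariants, equivalently (by the first part of the theorem) $\gamma_Q$-invariants, to $\gamma_{R^\mu}$-invariants, with $\infty$ absorbing the specializations that kill a denominator.

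\emph{Anticipated obstacle.} The place that wants care is the middle step: a priori, a quiver isomorphism $Q^\mu \to P_R(Q)^\mu$ restricting to $\sigma_R$ on $R^\mu$ need not act trivially on $N(Q^\mu) - N(R^\mu)$, since mutations inside $R^\mu$ can generate new arrows among the complementary nodes. I would handle this by arguing that the hypothesis that $\gamma_R$ extends to $\gamma_Q$ really selects the extension which is trivial on the complement, and that any competing extension differs from it by an element acting trivially on the $\mathcal{X}$ coordinates, hence is identified with it in $\Gamma_Q$.
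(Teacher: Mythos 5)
Your argument is essentially the paper's own, just spelled out: the paper disposes of the first assertion in one sentence (``the mutation rule on the $\mathcal A$ space only changes the variable being mutated''), and treats the evaluation-at-$1$ map as immediate, whereas you make explicit the intertwining $\varepsilon\circ\gamma_R=\gamma_{R^\mu}\circ\varepsilon$ coming from the fact that specializing the complementary variables to $1$ erases exactly the extra monomial factors in each exchange. That is all correct and complete. The one place you go beyond the paper is the ``anticipated obstacle,'' and there your proposed resolution is not quite right: an element $\{\langle\rangle,\tau\}$ with $\tau$ fixing $R^\mu$ pointwise but permuting nodes of $N(Q^\mu)-N(R^\mu)$ nontrivially does \emph{not} in general act trivially on the $\mathcal X$ coordinates (it permutes them), so two extensions of $\gamma_R$ need not coincide in $\Gamma_Q$. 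The correct reading, consistent with how the theorem is used later (e.g.\ in Lemma \ref{lem:a221}), is simply that $\gamma_Q$ is taken to be the extension acting as the identity on $N(Q^\mu)-N(R^\mu)$; with that convention your first two steps already close the argument.
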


\begin{thm} \label{thm:xdims}
We have a natural inclusion $\mathbb{R}(\mathcal{X}_R)^{<\gamma_R>} \subset \mathbb{R}(\mathcal{X}_Q)^{<\gamma_Q>}$. Furthermore, evaluating the variables associated to nodes in $N(Q^\mu)-N(R^\mu)$ at 0 gives a surjection of sets $\mathbb{R}(\mathcal{X}_Q)^{<\gamma_Q>}\cup\{\infty\} \rightarrow \mathbb{R}(\mathcal{X}_R)^{<\gamma_R>} \cup \{\infty\}$
\end{thm}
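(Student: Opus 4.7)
The plan is to deduce both parts of the theorem from two commutation identities between the modular-group action and a pair of natural maps between function fields. Let $\iota:\mathbb{R}(\mathcal{X}_R)\hookrightarrow\mathbb{R}(\mathcal{X}_Q)$ be the tautological inclusion coming from $N(R^\mu)\subset N(Q^\mu)$, and let $e:\mathbb{R}(\mathcal{X}_Q)\to \mathbb{R}(\mathcal{X}_R)\cup\{\infty\}$ be the specialization that sends each $x_j$ with $j\in N(Q^\mu)-N(R^\mu)$ to $0$ (sending $f$ to $\infty$ whenever this specialization is undefined). The whole theorem will follow once we establish the two identities $\gamma_Q\circ\iota=\iota\circ\gamma_R$ and $e\circ\gamma_Q=\gamma_R\circ e$: the first gives the inclusion of invariants directly, and the second, combined with the trivial equality $e\circ\iota=\mathrm{id}$, gives both well-definedness of $e$ on invariants and its surjectivity.

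The core technical input is a pair of ``locality'' observations about a single mutation $\mu_i$ with $i\in N(R^\mu)$. First, the exchange-matrix mutation rule for $\epsilon_{jk}$ involves only $\epsilon_{ji},\epsilon_{ik},\epsilon_{jk}$, which for $j,k\in N(R^\mu)$ agree in $Q$ and $R$; hence the subquiver induced on $N(R^\mu)$ by $\mu_i(Q)$ equals $\mu_i$ of the subquiver induced on $N(R^\mu)$ by $Q$, and iterating gives the same for the entire path $P_R$. Second, the $\mathcal{X}$-mutation rule $x_j\mapsto x_j(1+x_i^{-\sgn\epsilon_{ji}})^{-\epsilon_{ji}}$ is similarly local: for $j\in N(R^\mu)$ the formula matches in $Q$ and $R$, so $\mu_i^{*}$ restricted to $\iota(\mathbb{R}(\mathcal{X}_R))$ agrees with the mutation in $\mathcal{X}_R$, proving $\mu_i^{*}\circ\iota=\iota\circ\mu_i^{*}$; and for $j\notin N(R^\mu)$ the new $x_j$ has the old $x_j$ as a factor, so the locus $\{x_j=0:j\notin N(R^\mu)\}$ is mutation-invariant, proving $e\circ\mu_i^{*}=\mu_i^{*}\circ e$ wherever both sides are finite.

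Iterating along $P_R$ and then accounting for $\sigma_Q$ finishes the argument. The hypothesis $\sigma_Q|_{R^\mu}=\sigma_R$ forces $\sigma_Q$ to restrict to a bijection of $N(Q^\mu)-N(R^\mu)$ as well, so $\sigma_Q$ commutes with both $\iota$ and $e$ in the obvious way, and we obtain the two master identities $\gamma_Q\circ\iota=\iota\circ\gamma_R$ and $e\circ\gamma_Q=\gamma_R\circ e$. The first immediately gives $\iota\bigl(\mathbb{R}(\mathcal{X}_R)^{<\gamma_R>}\bigr)\subseteq\mathbb{R}(\mathcal{X}_Q)^{<\gamma_Q>}$. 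The second says that if $f$ is a $\gamma_Q$-invariant then $e(f)$ is either a $\gamma_R$-invariant or $\infty$, and surjectivity is immediate because for any $g\in\mathbb{R}(\mathcal{X}_R)^{<\gamma_R>}$ the function $\iota(g)$ is a $\gamma_Q$-invariant with $e(\iota(g))=g$.

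The one genuinely delicate point is handling the partial nature of $e$: a $\gamma_Q$-invariant may blow up on $\{x_j=0\}$ for some $j\notin N(R^\mu)$, so $e$ must be a map of sets rather than a ring map. The commutation $e\circ\mu_i^{*}=\mu_i^{*}\circ e$ is then cleanest to verify on the generators $x_k$ — where $\gamma_Q(x_k)$ for $k\notin N(R^\mu)$ manifestly carries the factor $x_{\sigma_Q^{-1}(k)}$, making its specialization $0$ — and to extend to rational functions by clearing denominators, with the case of simultaneous vanishing of numerator and denominator being exactly what must map to $\infty$. Aside from this careful bookkeeping around $\infty$, the argument is pure substitution.
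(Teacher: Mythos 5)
Your proof is correct and takes essentially the same route as the paper: the inclusion comes from the locality of the $\mathcal{X}$-mutation rule on the induced subquiver, and your commutation identity $e\circ\gamma_Q=\gamma_R\circ e$ is exactly the paper's observation that $\gamma_Q(x_k)$ for $k\notin N(R^\mu)$ carries the factor $x_{\sigma_Q^{-1}(k)}$ times a function of the $R^\mu$-variables, so that $\gamma_Q$ acts monomial-by-monomial and the constant term of an invariant is again invariant. The surjectivity via $e\circ\iota=\mathrm{id}$ and the bookkeeping around $\infty$ likewise match the paper's argument.
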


\begin{proof}

The first theorem follows from the fact that the mutation rule on the $\mathcal A$ space only changes the variable being mutated. The first part of the second theorem follows since the $\mathcal X$ coordinates are unchanged when frozen variables are added. The only thing left to prove is that evaluating the nodes in $N(Q^\mu)-N(R^\mu)$ at 0 gives a map from $\mathbb{R}(\mathcal{X}_Q)^{<\gamma_Q>}$ to $\mathbb{R}(\mathcal{X}_R)^{<\gamma_R>}$. The surjection is obvious because of the inclusion of rings.

Let $r = \#N(R^\mu)$ and renumber the nodes of $Q$ so that the first $r$ nodes are the nodes in $N(R^\mu)$. Let $G_i(x_1,\dots,x_n)=x_i$ and let $j= \sigma(i)$. Then for any $i > r$, we have
\begin{equation}
    \gamma_Q(G_i)=x_{j}h_j(x_1,\dots,x_r)
\end{equation}, where $h_j$ is some rational function that does not depend on any $x_i, i>r$. This follows since we never mutate at node $i$ in $\gamma_Q$, and the $\mathcal X$ mutation rule only multiplies coordinates by functions of those which are mutated.

Let $K = \mathbb{R}[x_{r+1},\dots,x_n]$ and $L = \mathbb{R}(x_1,\dots,x_r) = \mathbb{R}(\mathcal X_R)$. Then, $\gamma_Q$ acts on any monomial term in $K$ by multiplication by an element of $L$.
Now we write any function, 
\begin{equation}
    g = p/q \in \mathbb{R}(\mathcal X_Q) = \mathbb{R}(x_1,\dots,x_n),
\end{equation} 
as a ratio of two polynomials in $K$ with coefficients in $L$. Let $p_0,q_0 \in L$ be the constant terms of $p$ and $q$. Now we may see that if $g$ is invariant then we have 
\begin{equation}
    p\gamma_Q(q) = q\gamma_Q(p)
\end{equation}
which implies by comparing constant terms that
\begin{equation}
    p_0\gamma_Q(q_0) = q_0\gamma_Q(p_0)
\end{equation}
and so we have that $\frac{p_0}{q_0} \in L$ is invariant.

\end{proof}

\begin{cor} \label{cor:xdims}
 The dimension of the field extension $\mathbb{R}(\mathcal{X}_Q)^{<\gamma_Q>} / \mathbb{R}(\mathcal{X}_R)^{<\gamma_R>} \leq n-r$. 
\end{cor}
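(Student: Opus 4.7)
The plan is to choose a transcendence basis $f_1, \dots, f_d$ of the extension $\mathbb{R}(\mathcal{X}_Q)^{<\gamma_Q>} / \mathbb{R}(\mathcal{X}_R)^{<\gamma_R>}$ and to show that these same elements remain algebraically independent over the strictly larger field $\mathbb{R}(\mathcal{X}_R) = \mathbb{R}(x_1, \dots, x_r)$. Once this is done the corollary drops out immediately: since the $f_i$ lie in $\mathbb{R}(x_1, \dots, x_n)$ and this ambient field is purely transcendental of degree exactly $n - r$ over $\mathbb{R}(x_1, \dots, x_r)$, we conclude $d \leq n - r$.

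To promote algebraic independence from $\mathbb{R}(\mathcal{X}_R)^{<\gamma_R>}$ up to $\mathbb{R}(\mathcal{X}_R)$ I would run a Galois-style descent. The essential input—already used implicitly in the proof of Theorem \ref{thm:xdims}—is that $\gamma_Q$ restricts to $\gamma_R$ on the subfield $\mathbb{R}(x_1,\dots,x_r)$; this uses both the compatibility $\sigma_Q|_{R^\mu} = \sigma_R$ and the fact that mutating at a node $k \leq r$ affects $x_j$ for $j \leq r$ only through exchange matrix entries $\epsilon_{jk}$ already present in $R^\mu$. Suppose for contradiction that some nonzero $P \in \mathbb{R}(x_1,\dots,x_r)[T_1,\dots,T_d]$ satisfies $P(f_1,\dots,f_d)=0$, and choose such a $P$ with the minimal number $N$ of nonzero monomials. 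A single-monomial relation $c_\alpha T^\alpha = 0$ is impossible, since $c_\alpha$ and each $f_i$ are nonzero field elements, so $N \geq 2$.

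Applying $\gamma_Q$ to $P(f)=0$ and using $\gamma_Q(f_i)=f_i$ together with $\gamma_Q|_{\mathbb{R}(x_1,\dots,x_r)}=\gamma_R$ yields $\gamma_R(P)(f)=0$, where $\gamma_R(P)$ denotes $P$ with $\gamma_R$ applied coefficient by coefficient. Fixing any $\alpha$ with $c_\alpha \neq 0$, set $\lambda = c_\alpha/\gamma_R(c_\alpha)$ and consider $P' := P - \lambda\, \gamma_R(P)$. The $T^\alpha$-coefficient of $P'$ vanishes by construction and still $P'(f)=0$, so either $P' \neq 0$, which contradicts the minimality of $N$, or else $P = \lambda\, \gamma_R(P)$, which forces every ratio $c_\beta/c_\alpha$ to be $\gamma_R$-invariant. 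In the latter case $P = c_\alpha \cdot Q$ with $Q \in \mathbb{R}(\mathcal{X}_R)^{<\gamma_R>}[T_1,\dots,T_d]$ nonzero, and then $Q(f)=0$ contradicts the choice of $f_1,\dots,f_d$ as a transcendence basis over $\mathbb{R}(\mathcal{X}_R)^{<\gamma_R>}$.

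The main obstacle is this descent step; the outer transcendence-degree bookkeeping is standard once it is in place. What makes the descent work uniformly—regardless of whether $\gamma_Q$ has finite or infinite order—is that a single application of $\gamma_R$ at a time suffices to cancel one coefficient of $P$, sidestepping the norm or orbit-sum constructions that would otherwise require $<\gamma_Q>$ to be finite.
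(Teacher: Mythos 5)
Your proof is correct, but it takes a genuinely different route from the paper's. The paper deduces the corollary from the explicit structure of the action established in the proof of Theorem \ref{thm:xdims}: since $\gamma_Q$ sends each $x_i$ with $i>r$ to $x_{\sigma^{-1}(i)}$ times a function of $x_1,\dots,x_r$, it acts monomial-by-monomial on the expansion of a function in $x_{r+1},\dots,x_n$, so any new independent invariant can be taken to be a single monomial in $x_{r+1},\dots,x_n$ with coefficient in $\mathbb{R}(x_1,\dots,x_r)$; two invariants supported on the same monomial have ratio in $\mathbb{R}(\mathcal{X}_R)^{<\gamma_R>}$, and the monomial lattice has rank $n-r$. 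You instead prove a purely field-theoretic statement: if $\gamma_Q$ is a field automorphism of $\mathbb{R}(x_1,\dots,x_n)$ restricting to $\gamma_R$ on $\mathbb{R}(x_1,\dots,x_r)$, then a transcendence basis of $\mathbb{R}(\mathcal{X}_Q)^{<\gamma_Q>}$ over $\mathbb{R}(\mathcal{X}_R)^{<\gamma_R>}$ stays algebraically independent over $\mathbb{R}(x_1,\dots,x_r)$, via the minimal-relation descent (kill one coefficient of a minimal algebraic relation using $\gamma_R$, or conclude all coefficient ratios are invariant). Your descent step is sound --- the restriction $\gamma_Q|_{\mathbb{R}(x_1,\dots,x_r)}=\gamma_R$ does hold, since the mutation path only visits nodes of $R^\mu$ and the exchange-matrix entries among those nodes evolve identically in $Q$ and $R$ --- and your argument is arguably more rigorous than the paper's rather terse claim that ``any basis can be written as a basis of monomials.'' What you give up is the structural by-product of the paper's argument (that the new generators may be chosen of monomial type in $x_{r+1},\dots,x_n$), which is what the paper later leans on when it observes that the exhibited degree-one invariants must generate the extension in the affine classification; what you gain is a cleaner, more general lemma that does not depend on the special multiplicative form of the action on the extra variables and works uniformly whether $\gamma_Q$ has finite or infinite order.
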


\begin{proof}

This follows from the proof of the previous theorem. Since $\gamma_Q$ acts on each monomial in $K$ independently by multiplication by elements in $L$, any basis of $\mathbb{R}(\mathcal{X}_Q)^{<\gamma_Q> }/ \mathbb{R}(\mathcal{X}_R)^{<\gamma_R>}$ can be written as a basis of monomials in $K$ with coefficients in $L$. Now we may see that any particular monomial in $K$ can only be associated with one independent invariant, since the ratio of two such invariants is in $\mathbb{R}(\mathcal{X}_R)^{<\gamma_Q >}$. The corollary now follows from realizing that there are at most $n-r$ algebraically independent monomials in $K$.

\end{proof}

\begin{rem}
The evaluation maps could be interpreted as projective maps of varieties which have the corresponding invariant rings as function fields. However, it is not clear that these varieties actually exist. 
\end{rem}

\section{Invariants Of Ensembles Associated With Surfaces}\label{sec:geometry}

In many cases, cluster ensemble invariants have a geometric interpretation coming from the relationship between cluster ensembles and the Teichm\"uller theory of surfaces. This interpretation is vital for the analysis of invariants on affine ensembles done in section \ref{sec:affine}.  We will assume knowledge about hyperbolic structures on surfaces including monodromy operators, developing maps, and monodromy representations. 

The functions constructed in this section will be invariant under the action of a subgroup of the mapping class group of the surface, denoted $\MCG(S)$. We will denote the subgroup of puncture preserving mapping classes $\PMCG(S)$.

\subsection{Combinatorial data}

Let $S$ be a surface of genus $g$, with $p$ punctures, $b$ boundary components, and $m$ marked points on the boundary. We require that each boundary has at least 1 marked point and that $-\chi(S)= 2g-2 + p + b >0 $. By a hyperbolic structure on $S$, we mean a complete hyperbolic metric on $S$ with geodesic boundary and cusps at each of the marked points and punctures. The Teichm\"uller space of $S$, denoted $T(S)$, parameterizes these structures. 

Given $\Delta$ a triangulation of $S$, we associate a quiver, $Q_\Delta$, as follows: For each edge $e \in \Delta$ we add a node $N_e$ and for each triangle $t \in \Delta$ we add a clockwise oriented cycle of arrows between the nodes associated with the edges of $t$. The nodes associated to boundary edges are frozen. There are  $-3\chi(S)+2m $ total nodes and $m$ frozen nodes.  We refer to \cite{Tri:1} for details about the combinatorics of cluster ensembles associated to triangulations of surfaces.

The cluster ensemble generated by using $Q_\Delta$ as a seed is closely related to the Teichm\"uller theory of $S$.
The coordinates of $\mathcal A_{Q_\Delta}$ and $\mathcal X_{Q_\Delta}$ spaces correspond to lambda lengths and cross ratio coordinates respectively. We refer to \cite{Penner:1} for details about lambda lengths and cross ratios in the context of cluster algebras, and to \cite{Fockgonch:3} section 4 for a simple introduction to the ingredients of a cluster ensemble associated to a hyperbolic surface. We will briefly discuss these ideas here. 

\subsection{$\mathcal{A}$ coordinates and lambda lengths}
$\mathcal{A}_{|Q_\Delta|}$ is Penner's decorated Teichm\"uller space, denoted $\tilde{T}(S)$, see \cite{Penner:decorated}. This space parameterizes hyperbolic structures on $S$ along with a ``decoration'' of $S$ consisting of a choice of horocycles around each puncture and marked point. The $\mathcal{A}$ coordinates are related to this decorated hyperbolic structure by 
\begin{equation}
    a_e = \exp(l_e/2) = \lambda_e
\end{equation}
where $l_e$ is the length of a geodesic representative of $e$ measured between the horocycles. $\lambda_e$ is called the ``lambda length'' of $e$. 

\subsection{$\mathcal{X}$ coordinates and cross ratios}
$\mathcal{X}_{|Q_\Delta|}$ is the ``Teichm\"uller $\mathcal{X}$ space'' of section 4.1 in \cite{Fockgonch:3}. The $\mathcal{X}$ coordinates are related to a hyperbolic structure on $S$ by cross ratios. We have an $\mathcal{X}$ coordinate for each edge $e \in \Delta$ on a non boundary component of $S$, and we associate them with cross ratios as follows. Given a hyperbolic structure on $S$, we may consider the image of the developing map $\Dev(S) \subset \mathbb{H}^2$ and see that $e$ is contained in a square in $\Dev(S)$. This square is determined by the points $z_1,z_2,z_3,z_4 \in \partial\mathbb{H}^2$, with $z_1$ on $e$ and the $z_i$'s are in clockwise order\footnote{There are really two choices of $z_1$, but they lead to the same cross ratio.}. Identifying $\partial\mathbb{H}^2$ with $\mathbb{RP}^1$, we have that 
\begin{equation}
    x_e = \frac{(z_1-z_2)(z_3-z_4)}{(z_2-z_3)(z_1-z_4)}
\end{equation}

We may obtain $T(S)$ as the sub-manifold of $\mathcal{X}_{Q_\Delta}$ where 
\begin{equation}
    \prod_{e \in \Delta, p\in \partial e}x_e = 1
\end{equation}
for every puncture, $p$.

\subsection{Action of the Mapping Class Group}

We can define an action of $\MCG(S)$ on the triangulations of $S$ and hence identify the mapping class group as a subgroup of the cluster modular group. We give an explicit construction of this subgroup here.

Given $ f \in \MCG(S) $ we can define $\gamma_f \in \Gamma $ as follows: $f$ gives a new triangulation of $S$ and hence by Whitehead's lemma there is a path of flips, $P_f$, taking $\Delta$ to $f(\Delta)$. $f$ defines a map between the edges of $\Delta$ and $f(\Delta)$ and it preserves the adjacency relations between the triangles of $\Delta$. This means that $\Delta$ and $f(\Delta)$ have the same associated quivers. $P$ defines a map between the nodes of $Q_{f(\Delta)}$ and $P(Q_\Delta)$ since these quivers come from the same triangulation.  Let $\sigma_{f,P}$ be the isomorphism of quivers $Q_\Delta$ to $P(Q_\Delta)$ defined by the composition 
\begin{equation}
   \sigma_{f,P} : Q_\Delta \xrightarrow{f} Q_{f(\Delta)} \xrightarrow{P} P(Q_\Delta) 
\end{equation}
Thus to $f$ we associate $\gamma_f = \{P_f,\sigma_{f,P}\}$. 

It is not immediately clear that this does not depend on the choice of path, $P$. Let $\{P,\sigma\}$ and $\{R,\tau\}$ be two possible representatives of $\gamma_f$. Then we have 
\begin{equation}
\{P,\sigma\}\{R,\tau\}^{-1} = \{P,\sigma\}\{\tau^{-1}(R^{-1}), \tau^{-1}\} = \{P\sigma\tau^{-1}(R^{-1}),\sigma\tau^{-1}\}
\end{equation}
We need to show that this element is a trivial cluster transformation. $\sigma\tau^{-1}$ is the quiver isomorphism from $R(Q_\Delta)$ to $P(Q_\Delta)$ coming from the fact that these both correspond to the same triangulation of $S$. The composite mutation path, $P\sigma\tau^{-1}(R^{-1})$, consists of following $P$ and then following $R^{-1}$ back to our initial cluster. This introduces a permutation on the cluster variables determined by the map $\tau\sigma^{-1}: P(Q_\Delta) \rightarrow R(Q_\Delta)$. Together these permutations act trivially on the cluster variables, and $\gamma_f$ is well defined in the cluster modular group.

For all but finitely many quivers associated with surfaces, the cluster modular group is essentially equal to the mapping class group, see \cite{Bridgeland:mappingclassgroup} proposition 8.5. For the remaining surfaces, one may check case by case that $\MCG(S)$ is always a normal subgroup of $\Gamma$. 

\subsection{Producing $\mathcal A$ Invariants}
We can produce $\mathcal A$ invariants by clever use of the following fact about lambda lengths. Let $S$ be a hyperbolic surface decorated with horocycles,  $t \in \Delta$ be a triangle, and let $h_3$ be the segment of the horocycle bounded between edges 1 and 2, see figure \ref{fig:lambdalength}. Then the hyperbolic length $h_3$ is $\dfrac{\lambda_3}{\lambda_1\lambda_2}$, see lemma 4.4 of \cite{Penner:1}. 

We can use this formula to write a function that evaluates to the length of a horocycle about a marked point on our surface in terms of the $\mathcal A$ coordinates of a given triangulation. This function is preserved under the action of mapping classes since the total length of the horocycle and the topology of the triangulation are preserved. It is important to notice that this expression will not be invariant under any mapping class that permutes this marked point with another. 

We can construct an invariant for the entire mapping class group by summing all of the lengths of horocycle segments on $S$.  Let $e_{t1},e_{t2},e_{t3}$ be the three edges of a triangle $t \in \Delta$. This invariant is given by the formula
\begin{equation}
    \sum_{t \in \Delta} \frac{a_{e_{t1}}^2+a_{e_{t2}}^2+a_{e_{t3}}^2}{a_{e_{t1}}a_{e_{t2}}a_{e_{t3}}}
\end{equation}

\begin{figure}[hb]
    \centering
    \includegraphics[scale=.5]{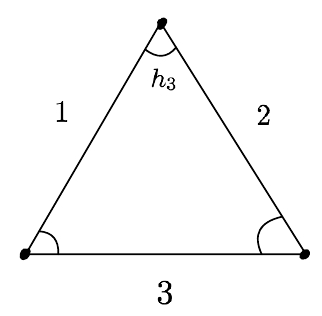}
    \caption{An ideal hyperbolic triangle with horocycle segments around its three vertices.}
    \label{fig:lambdalength}
\end{figure}

\subsection{Producing $\mathcal X$ Invariants}
$\mathcal X $ invariants can be found by taking traces of monodromy matrices associated to paths on the surface that are invariant under the desired subgroup. The monodromy around curves on the surface can be explicitly described in terms of the $\mathcal X$ coordinates of a fixed triangulation. If an arc is preserved by a mapping class, then the edges that the arc intersects and the order in which the arc intersects them must be preserved too. Thus, if we take the trace of the monodromy operator associated to the monodromy around this arc as a function of the $\mathcal X$ coordinates, we must get an invariant of the $\mathcal X$ space under the action of that particular mapping class. For explicit formulas of the traces of monodromy operators in terms of the $\mathcal X$ coordinates see the proof of theorem 6.2 in \cite{Penner:1}.
Functions of $\mathcal X$ coordinates produced in this way will only be invariant under mapping classes that preserve the loop about which this monodromy operator is associated with. Taking loops homotopic to boundary components will always give an invariant for the entire mapping class group. 

We can easily produce Casimir invariants associated to each puncture on $S$ by taking the product of the $\mathcal X$ coordinates associated to edges touching the puncture. This is essentially equivalent to computing the trace of a monodromy operator associated with a loop about a puncture. 

\begin{rem} \label{rem:glue}

We note that in each of these cases that the functions produced are essentially the gluing equations for a hyperbolic structure. Thus one can interpret the invariance of these expressions as a way of stating that the action of the mapping class group does not change the topological structure of the surface and hence does not change the conditions that must be satisfied for there to be a hyperbolic structure on it. 

\end{rem}

\subsection{Examples}
For the first two examples, we refer to \cite{Margalit:PMCG} section 2 for computations involving the mapping class group of selected surfaces. 

\begin{exmp}
\label{exmp:hypmarkov}
There is a simple geometric interpretation of the invariant of the Markov quiver, example \ref{exmp:markov}. If $S = S_{1,1}$ is a torus with one puncture then the cluster ensemble associated to $S$ is of type $ A_1^{(1,1)} $ see figure \ref{fig:torus_quiver}. If we have a hyperbolic metric on $S$ with the puncture at infinity and a horocycle around the puncture, then the theory of lambda lengths implies that the function $F$ is simply the formula for half the length of the horocycle in terms of the $\mathcal A$ coordinates. Since the length of the horocycle is independent on the triangulation and there is only one topological type of triangulation, $F$ must be invariant under exchanges. 

\begin{figure}
    \centering
    \includegraphics[scale =.5]{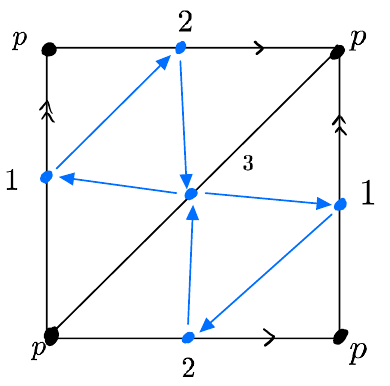}
    \caption{The surface $S_{1,1}$ with a choice of triangulation and associated quiver.}
    \label{fig:torus_quiver}
\end{figure}

\end{exmp}

\begin{exmp}
\label{exmp:4punctured}
The invariants of example \ref{exmp:d411:1} can be understood via surfaces as well. The cluster algebra associated to $S=S_{0,4}$, a  four punctured sphere, is of type $D_4^{(1,1)}$. If we take a triangulation of $S$ that topologically looks like a tetrahedron, then the quiver associated with this triangulation is $Q_4$ from figure \ref{fig:D4-11_quivers} and the length of a particular horocycle around a puncture is given by a function like $F_{456}$. We have that 
\begin{equation}
\PMCG(S) = \mathbb{Z}*\mathbb{Z}
\end{equation}
and hence we have that this function is invariant under the subgroup claimed in the example. 

%The relationship with the previous example can be understood by the relationship between $S_{0,4}$ and $S_{1,1}$ via the hyper elliptic involution:
%\begin{equation}
%    \iota: S_{1,1} \rightarrow S_{1,1}
%\end{equation}
%If we identify $S_{0,4} $ with $ S_{1,1}/\iota(S_{1,1}) $
\end{exmp}

\begin{exmp} \label{exmp:hypa11}

The $\mathcal X$ invariant from example \ref{exmp:a11} can be interpreted as follows: The $\widetilde{A}_1$ affine ensemble is associated to an annulus with one marked point on each boundary component (see figure \ref{fig:surf1}). The cluster modular group corresponds exactly to the mapping class group of $S$ and the generator $\gamma= \{1,(12)\}$ corresponds to a Dehn twist about $\delta$. If we take the trace of the monodromy operator, $\rho(\delta)$, associated to this surface we get $ Tr(\rho(\delta)) = \dfrac{x_2(x_1+1)+1}{\sqrt{x_1x_2}}$. This is exactly the square root of $G(x_1,x_2)$ from before. 

\begin{figure}[ht]
    \centering
    \includegraphics[scale=.5]{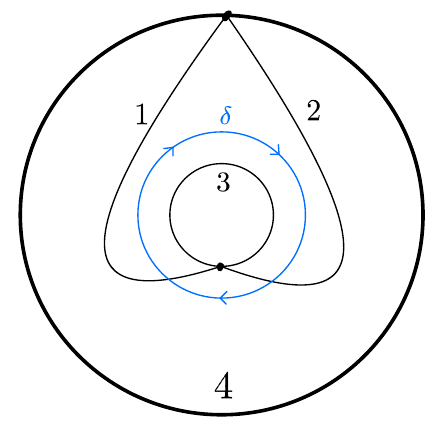}
    \caption{The surface S, along with a choice of triangulation. $\delta$ is the generator of the fundamental group and the mapping class $\gamma$ corresponds to a Dehn twist about $\delta$.}
    \label{fig:surf1}
\end{figure}

We can also investigate the limiting behavior of the cluster variables via the hyperbolic geometry of $S$. The $\mathcal{A}$ coordinates correspond to lambda lengths associated to the two interior arcs on $S$. After many applications of $\gamma$, we saw that the $\mathcal{A}$ were approximately changing by multiples of $ \lambda = \frac{F+\sqrt{F^2-4}}{2} = \exp(\arccosh(F/2))$, where $F= \rho^*(\sqrt{G})$. One may check that there is exactly 1 closed geodesic with the same homotopy class as $\delta$ and its hyperbolic length is equal to $2\arccosh(F/2)$. Thus, the multiplier $\lambda$ is essentially the lambda length of the arc $\delta$.

\end{exmp}

\section{Invariants on Affine Ensembles}\label{sec:affine}

We will give a classification of the invariants for the puncture preserving mapping class group on each of the affine type cluster ensembles with trivial coefficients. These are the ensembles associated to mutation classes of quivers which contain orientations of affine $ADE$ Dynkin diagrams. In the affine $\widetilde{A}_n$ case there is really a family of quiver mutation classes $\widetilde{A}_{p,q}$ with $p+q=n+1$ and $0 < q \leq p$. These mutation classes have a quiver which is an $n+1$ cycle with $p$ arrows one direction and $q$ the other.   

The $\widetilde{A}_{p,q}$ and $\widetilde{D_n}$ affine type cluster ensembles are associated to an annulus with $p$ and $q$ marked points on each boundary and a twice punctured disk with $n-2$ marked points on the boundary respectively, see \cite{Tri:1} examples 6.9 and 6.10. The $E$ type affine cluster ensembles are not associated to surfaces, but we will introduce an analog of the mapping class group in these cases.

Each of the affine $ADE$ mutation classes contains the quiver $T_{p,q,r}$,  where $\frac{1}{p}+\frac{1}{q}+\frac{1}{r} < 1$, see figure \ref{fig:QuiverGeneralAffine}. For $\widetilde{A}_{p,q}$, we have  $(p,q,r)=(p,q,1)$ , for $\widetilde{D_n}$ we have $(p,q,r)=(n-2,2,2)$, and for $\widetilde{E}_n$ we have $(p,q,r)=(n-3,3,2)$. We will use this quiver as our initial seed. We associate variables
\begin{align}
    &(a_1,a_2,b_2,\dots,b_p,c_2,\dots,c_q,d_2,\dots,d_r) \\
    &(x_1,x_2,y_2,\dots,y_p,z_2,\dots,z_q,w_2,\dots,w_r)
\end{align}
for the variables on the $\mathcal A$ and $\mathcal X$ spaces in a natural way.  

\begin{figure}[ht]
\centering
\begin{tikzcd}[column sep=1.5em]
              &                   &               &                & A_1 \arrow[dd, Rightarrow]           & B_2 \arrow[l]   & ... \arrow[l] & B_{p-1} \arrow[l] & B_{p} \arrow[l] \\
D_r \arrow[r] & D_{r-1} \arrow[r] & ... \arrow[r] & D_2 \arrow[ru] &                                    &                 &               &                   &                 \\
              &                   &               &                & A_2 \arrow[lu] \arrow[ruu] \arrow[r] & C_2 \arrow[luu] & ... \arrow[l] & C_{q-1} \arrow[l] & C_q \arrow[l]  
\end{tikzcd}

    \centering
    
    \caption{General form of a $T_{p,q,r}$ quiver.}
    \label{fig:QuiverGeneralAffine}
\end{figure}
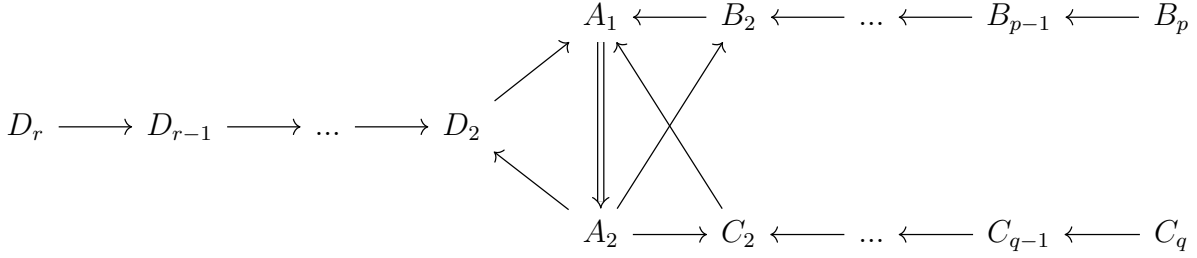

In the $A$ and $D$ cases, the puncture preserving mapping class groups of the corresponding surfaces are isomorphic to $\mathbb{Z}$. We can identify this as a subgroup of the cluster modular groups of $\Gamma_{T_{p,q,r}}$ generated by the element $\gamma=\{A_1,(A_1A_2)\}$. This also gives a subgroup isomorphic to $\mathbb{Z}$ in the $E$ cases and we will take this as our analog to the mapping class group. For the rest of this section, we will always be considering invariants with respect to the group $\Gamma = <\gamma>$.
 
We can now state the main theorem of this section.

\begin{thm}
On an affine cluster ensemble with trivial coefficients of rank $n+1$, there are exactly $n$ algebraically independent invariant functions under the action of the puncture preserving mapping class group on the $\mathcal X$ and $\mathcal A$ spaces. Explicitly, $\mathbb{R}(\mathcal X_{T_{p,q,r}})^\Gamma$ is generated by the functions $G(x_1,x_2)$ of example \ref{exmp:a11}, $y_2(x_2(x_1+1)+1)$ , $z_2(x_2(x_1+1)+1)$ , $w_2(x_2(x_1+1)+1)$, and the remaining $\mathcal X$ coordinates corresponding to nodes that are not connected to nodes $A_1$ or $A_2$. $\mathbb{R}(\mathcal A_{T_{p,q,r}})^\Gamma$ is generated by $\rho^*(\sqrt{G(x_1,x_2)})$ and the remaining $n-1$ $\mathcal A$ coordinates associated to the nodes other than nodes $A_1$ and $A_2$. 
\end{thm}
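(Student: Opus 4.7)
The plan is to reduce the classification problem to the $\tilde{A}_1$ subensemble on the central pair of nodes $\{A_1, A_2\}$, and then exploit the dimension bounds from section 4 together with explicit formulas. Consider the subquiver $R$ obtained from $T_{p,q,r}$ by freezing every node except $A_1$ and $A_2$; this is a $\tilde{A}_1$ quiver with coefficients, and because $\gamma = \{A_1,(A_1 A_2)\}$ only mutates at $A_1$ and swaps the labels, it descends to the generator $\gamma_R$ of the cluster modular group of $R$. For the $\mathcal{A}$ side, Theorem \ref{thm:adims} then gives the equality $\mathbb{R}(\mathcal{A}_{T_{p,q,r}})^{\langle\gamma\rangle} = \mathbb{R}(\mathcal{A}_R)^{\langle\gamma_R\rangle}$. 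The $n-1$ coordinates $a_j$ with $j \notin \{A_1,A_2\}$ are frozen in $R$ and hence automatically $\gamma_R$-invariant, while the one remaining generator is the coefficient-decorated analogue of the horocycle-length invariant from example \ref{exmp:a11}. I would identify this generator as $\rho^*(\sqrt{G(x_1,x_2)})$ by checking that $\rho^*(x_1 x_2)$ is a perfect square monomial in the $\mathcal{A}$ variables, so that the formal square root is a genuine function, and then verifying directly that it agrees with the natural Markov-type invariant.

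For the $\mathcal{X}$ side, I would apply Theorem \ref{thm:xdims} to embed $G \in \mathbb{R}(\mathcal{X}_R)^{\langle\gamma_R\rangle}$ into $\mathbb{R}(\mathcal{X}_{T_{p,q,r}})^{\langle\gamma\rangle}$, contributing one generator, and then use Corollary \ref{cor:xdims} to bound the transcendence degree of the extension by $n-1$. To match this bound I would exhibit the $n-1$ additional invariants listed in the theorem. The coordinates $x_j$ for nodes not adjacent to $A_1$ or $A_2$ are trivially invariant, since neither mutation at $A_1$ nor the swap $(A_1 A_2)$ affects them. For the three bridging functions $y_2(x_2(x_1+1)+1)$, $z_2(x_2(x_1+1)+1)$, and $w_2(x_2(x_1+1)+1)$, I would verify invariance by a direct mutation computation: the $\mathcal{X}$-mutation at $A_1$ multiplies each of $y_2, z_2, w_2$ by a factor of the form $(1+x_1^{-\sgn(\epsilon_{j A_1})})^{-\epsilon_{j A_1}}$, and the composite $\gamma$ transforms $x_2(x_1+1)+1$ by exactly the reciprocal factor. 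Algebraic independence of the full list of $n$ invariants then follows from a support argument: each peripheral or bridging generator involves a distinct $\mathcal{X}$ coordinate absent from all others, so any nontrivial algebraic relation would restrict to one purely in $\mathbb{R}(x_1,x_2)$, contradicting the transcendence of $G$ alone.

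The main obstacle, as I see it, is the careful bookkeeping in the $\mathcal{X}$-invariance computation for the bridging functions. The mutation formula for $x_j'$ depends on $\sgn(\epsilon_{j A_1})$ and on the double-arrow weight between $A_1$ and $A_2$, so a sign error could easily hide the required cancellation. I expect that once the computation is carried out for the $B$-arm it applies verbatim to the $C$-arm by the symmetric role of $A_2$, and to the $D$-arm where the contributions from $A_1$ and $A_2$ combine consistently. A secondary subtlety is confirming that $\rho^*(G)$ really is a perfect square on the $\mathcal{A}$ side; this should follow from the observation that $\rho^*(x_1 x_2)$ has even exponent in every arm variable because the two nodes $A_1, A_2$ contribute oppositely signed copies of each incident edge, so the square root is a genuine rational function rather than merely a formal expression.
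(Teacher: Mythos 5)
Your reduction skeleton matches the paper's: freeze everything except $A_1,A_2$, invoke Theorem \ref{thm:adims} on the $\mathcal A$ side and Theorem \ref{thm:xdims} together with Corollary \ref{cor:xdims} on the $\mathcal X$ side, and check invariance of the listed bridging functions directly. But there is a genuine gap at the heart of the argument: you never prove the rank-two base case, namely that $\mathbb{R}(\mathcal X_{T_{1,1,1}})^{\Gamma}=\mathbb{R}(G(x_1,x_2))$ and that the invariant field of the $\widetilde{A}_1$ subensemble with coefficients is generated by the coefficients together with $\rho^*(\sqrt{G})$ \emph{and nothing else}. You write that ``the one remaining generator is the coefficient-decorated analogue of the horocycle-length invariant,'' but Example \ref{exmp:a11} only exhibits $G$ as \emph{an} invariant; it says nothing about why every $\gamma$-invariant rational function of $(x_1,x_2)$ is a rational function of $G$. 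This is exactly what Lemmas \ref{lem:xa11} and \ref{lem:a221} establish, and it is the hardest part of the proof: the paper identifies $\mathcal X_{|T_{1,1,1}|}$ with the Teichm\"uller space of an annulus, maps it to the $\PSL(2,\mathbb{R})$ character variety $\Sigma$, uses the limiting behavior of the clusters $(x_k,y_k)$ under $\gamma^k$ to show any invariant is constant on the fibers of $T(S)\to\Sigma$, and then constructs a rational splitting to conclude the invariant is rational in $T^2$. On the $\mathcal A$ side the analogous step (Lemma \ref{lem:a221}) further requires ruling out the odd part of a degree-two extension by showing the splitting introduces genuine square roots. None of this is replaced by anything in your proposal.

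Your closing ``support argument'' only yields algebraic independence of the $n$ listed functions, and Corollary \ref{cor:xdims} only bounds the size of the extension over $\mathbb{R}(\mathcal X_R)^{\langle\gamma_R\rangle}$; neither gives generation of the full invariant field unless the base field $\mathbb{R}(\mathcal X_R)^{\langle\gamma_R\rangle}$ has itself been computed. (Independence plus a transcendence-degree count cannot rule out a proper subfield of the same transcendence degree.) Your observation that $\rho^*(x_1x_2)$ is a perfect square, so that $\rho^*(\sqrt{G})$ is a genuine rational function, is correct and worth keeping, but it addresses well-definedness, not generation. To complete the proof you must supply the rank-two classification, either by the paper's hyperbolic-geometric route or by some direct algebraic substitute.
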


We divide this theorem into 4 lemmas. 

\begin{lem}  \label{lem:xa11}
$\mathbb{R}(\mathcal X_{T_{1,1,1}})^\Gamma= \mathbb{R}(G(x_1,x_2)) $ where $ G(x_1,x_2) = \dfrac{(x_2(x_1+1)+1)^2}{x_1x_2}$ 
\end{lem}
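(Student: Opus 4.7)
The plan is to establish $K := \mathbb{R}(\mathcal X_{T_{1,1,1}})^\Gamma = \mathbb{R}(G)$ by a two-stage field-theoretic argument. Let $F = \mathbb{R}(x_1,x_2)$. The inclusion $\mathbb{R}(G) \subseteq K$ is recorded in Example \ref{exmp:a11}; it can also be re-checked directly by applying the $\mathcal X$ mutation rule at node $1$ of the Kronecker quiver $1 \Rightarrow 2$, which (after the permutation $(12)$) sends $(x_1,x_2) \mapsto \bigl(x_2(1+x_1)^2,\, x_1^{-1}\bigr)$ and preserves $G$. For the reverse inclusion I will show (a) $K$ is algebraic over $\mathbb{R}(G)$, and (b) $\mathbb{R}(G)$ is algebraically closed in $F$.

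For (a), I argue by transcendence degree. Since $\Gamma \cong \mathbb{Z}$ acts faithfully on $F$, if $F/K$ were a finite algebraic extension of degree $n$, then $\Aut_K(F)$ would have order at most $n$ (automorphisms are determined by where a primitive element goes, among finitely many conjugates), forcing $\Gamma$ to be finite, a contradiction. Hence the transcendence degree of $K$ over $\mathbb{R}$ is at most $1$, and since $G \in K$ is transcendental, it equals $1$. Any two elements of a field of transcendence degree $1$ over $\mathbb{R}$ are algebraically dependent, so each $f \in K$ satisfies a nontrivial polynomial relation with $G$, i.e.\ $f$ is algebraic over $\mathbb{R}(G)$.

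For (b), the main computational step, I reduce to checking geometric irreducibility of the generic fiber of $G \colon \mathbb{A}^2 \dashrightarrow \mathbb{A}^1$, cut out by
\[
P(x_1, x_2) \;:=\; \bigl((x_1+1)x_2 + 1\bigr)^2 - G\,x_1 x_2.
\]
Viewed as a quadratic in $x_2$ with coefficients in $\overline{\mathbb{R}(G)}(x_1)$, its discriminant simplifies to $G\,x_1\bigl((G-4)x_1 - 4\bigr)$. As a polynomial in $x_1$ over the algebraically closed field $\overline{\mathbb{R}(G)}$, this has the two distinct roots $0$ and $4/(G-4)$ (distinct because $G$ is transcendental over $\mathbb{R}$, so $G-4 \neq 0$), hence is squarefree and not a square in $\overline{\mathbb{R}(G)}(x_1)$. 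Therefore $P$ is irreducible in $\overline{\mathbb{R}(G)}(x_1)[x_2]$, and Gauss' lemma (using the constant term $1$ to guarantee primitivity) promotes this to irreducibility in $\overline{\mathbb{R}(G)}[x_1,x_2]$. This is exactly the condition that $\mathbb{R}(G)$ be algebraically closed in $F$, so combined with (a) it gives $K = \mathbb{R}(G)$. The principal obstacle is this irreducibility check; the other steps are soft field theory, and the calculation is feasible only because $P$ happens to be quadratic in $x_2$, reducing the check to a single discriminant computation.
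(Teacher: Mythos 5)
Your proof is correct, but it takes a genuinely different route from the paper's. The paper argues geometrically: it identifies $\mathcal X_{|T_{1,1,1}|}$ with the Teichm\"uller space of an annulus with one marked point per boundary, maps it to the $\PSL(2,\mathbb{R})$-character variety via the monodromy representation, uses the limiting behavior of the clusters under $\gamma^k$ (namely $x_k\to\infty$, $y_k\to 0$, $x_ky_k\to z$) to show that any invariant is independent of the fiber coordinate $w$ of that map, and then exhibits an explicit rational splitting $s_1$ whose pullbacks $\tilde x,\tilde y$ are rational in the squared trace $T^2$, with $G=\pi^*(T^2)$. Your argument is purely field-theoretic: the transcendence-degree count via faithfulness of the $\mathbb{Z}$-action (step (a)) is sound, since $F/K$ algebraic would force $F/K$ finite ($F$ being finitely generated over $\mathbb{R}$) and hence $\Aut_K(F)$ finite; and the relative algebraic closure statement (step (b)) correctly reduces to geometric irreducibility of $P=((x_1+1)x_2+1)^2-Gx_1x_2$, whose discriminant in $x_2$ I confirm equals $Gx_1((G-4)x_1-4)$, squarefree in $x_1$ over $\overline{\mathbb{R}(G)}$, so the Gauss-lemma conclusion holds (note that irreducibility of $P$ over $\overline{\mathbb{R}(G)}$ also justifies, a posteriori, that $F$ is the function field of $\{P=0\}$ over $\mathbb{R}(G)$, since the kernel of $\mathbb{R}(G)[X_1,X_2]\to F$ is a height-one prime containing $P$). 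What each approach buys: yours is elementary, self-contained, and avoids all Teichm\"uller theory, and would apply verbatim to any candidate generator of the invariant field of a rank-2 ensemble with $\Gamma\cong\mathbb{Z}$; the paper's geometric argument is heavier but explains \emph{why} $G$ is the invariant (it is the squared trace of the boundary monodromy), immediately yields Corollary \ref{cor:x} on adjoining $\sqrt{x_1x_2}$, and is reused essentially unchanged in the proof of Lemma \ref{lem:a221} on the $\mathcal A$ side --- consequences your method would need separate arguments to recover.
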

\begin{proof}

Let $Q=T_{1,1,1}$. This is the same quiver from examples \ref{exmp:a11} and \ref{exmp:hypa11}.
This lemma follows from further geometric analysis of the function $G$.
For simplicity of notation, let $x=x_1$ and $y=x_2$

Let $g(x,y) \in \mathbb{R}(\mathcal X_Q)^\Gamma$. Let $S$ be an annulus with one marked point on each boundary component, see \ref{fig:surf1}, and $Q = Q_\Delta = T_{1,1,1}$ for any triangulation of $S$. Then, since there are no punctures on $S$, $\mathcal{X}_{|Q|}$ is isomorphic to $T(S)$. Let $\Sigma = \Hom(\mathbb{Z}, \PSL(2,\mathbb{R}))//\PSL(2,\mathbb{R})$ be the character variety associated to the holonomy of a hyperbolic structure on $S$, and let $T$ be the trace coordinate on $\Sigma$. We refer to \cite{Goldman:fricke} for an introduction to trace coordinates on representation varieties of this type. There is a map $\pi$ From $T(S)$ to $\Sigma$ taking a hyperbolic surface to its monodromy representation, $\rho$. By the analysis of section 5, we have that $G(x,y)= \pi^*(T^2)$. The lemma follows by exhibiting a rational lift, $\hat{g}: \Sigma \rightarrow \mathbb{R}$ such that $g = \hat{g} \circ \pi$. This is encompassed by the following commutative diagram; the map $s_c$ is defined below. 
\begin{equation}
    \centering
    \begin{tikzcd}[sep = large]
T(S) \arrow[d, "\pi"', bend right] \arrow[r, "g"]           & \mathbb{R} \\
\Sigma \arrow[u, "s_c"', bend right] \arrow[ru, "\hat{g}"'] &           
\end{tikzcd}
\end{equation}

\begin{figure}[ht]
    \centering
    \includegraphics[scale=.2]{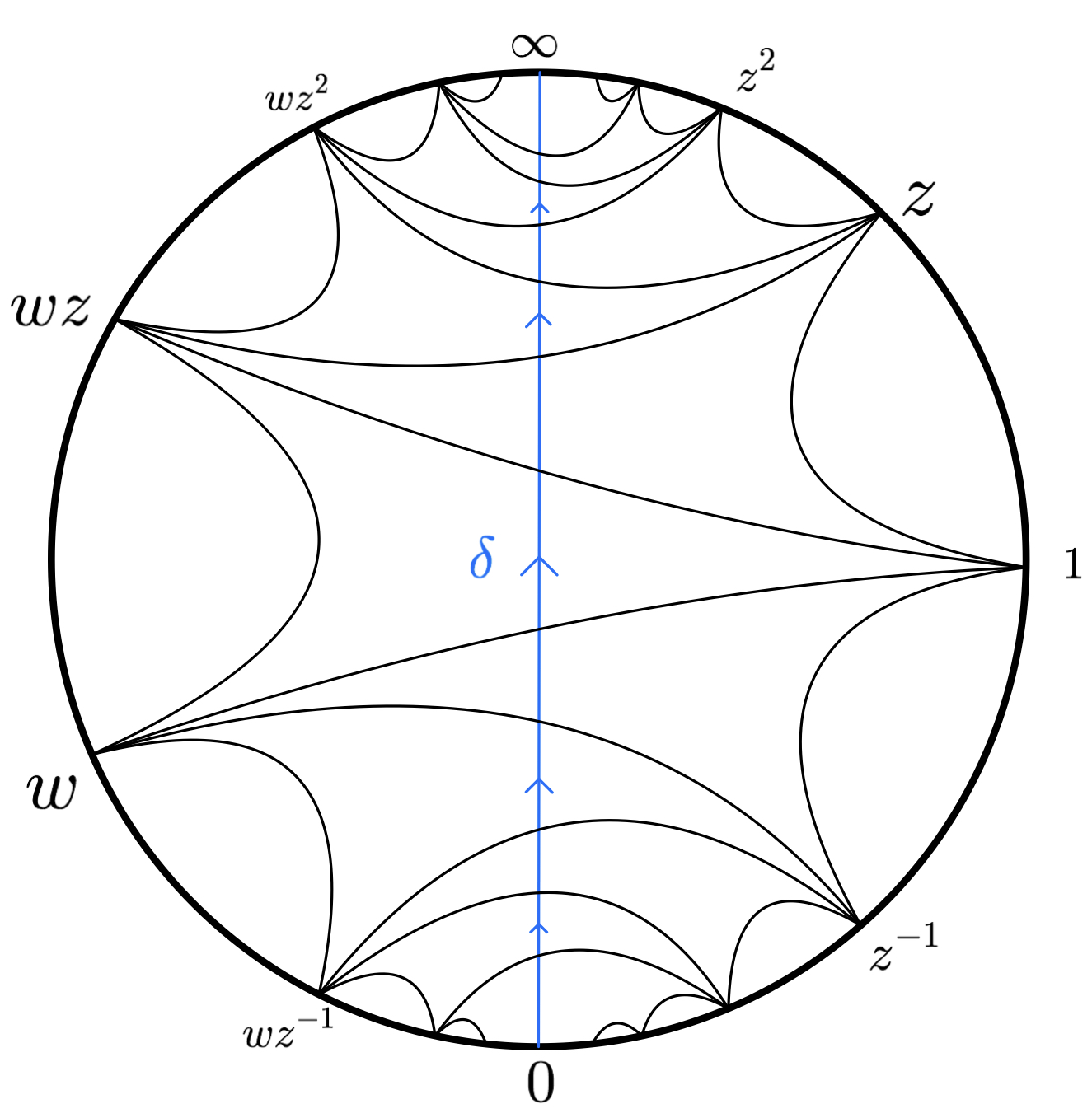}
    \caption{The image of $\Dev(S)$ in $\mathbb{H}^2$. We are identifying $\partial \mathbb{H}^2$ with $\mathbb{RP}^1$ in the usual way. The values of $x$ and $y$ are given by the cross ratios of the squares containing their corresponding edges.}
    \label{fig:hyp1}
\end{figure}

To construct $\hat{g}$, we construct a rational splitting $s: \Sigma \rightarrow T(S)$ satisfying $\pi \circ s = id_\Sigma$ and define $\hat{g}= g \circ s$. The lemma follows by showing that $g= g \circ s \circ \pi$ and that $s^*(\mathbb{R}(x,y)) \subset \mathbb{R}(T^2)$. 

The fibers of $\pi$ are 1 dimensional and isomorphic to $\mathbb{R}^{>0}$. By using hyperbolic isometries, we may arrange that the image of the developing map, $\Dev(S) \subset \mathbb{H}^2$ appears as figure \ref{fig:hyp1} . $\delta$ generates $\pi_1(S)$ and $\rho(\delta)= 
\begin{bmatrix}
 \sqrt{z} & 0\\
 0 & \frac{1}{\sqrt{z}}
\end{bmatrix}
$.
The fibers of $\pi$ are parameterized by the value of $w$. Thus for any $c \in \mathbb{R}^{>0}$ we have a possibly non-rational splitting $s_c$ given by picking the surface with $w=-c$. 

We have the following fact: $g(x,y)$ does not depend on the value of $w$. To see this, we use the invariance of $g$. 
Let $(x,y)=(x_0,y_0)$ be our initial cluster and let $(x_k,y_k)$ be the cluster obtained after applying $\gamma^k$. We can compute the limiting behavior of $\{x_k\}$ and $\{y_k\}$ by observing what happens to triangulation of $S$ after applying $\gamma^k$. We can compute that
\begin{equation}
    x_k \rightarrow \infty \quad y_k \rightarrow 0 \quad x_ky_k \rightarrow z.
\end{equation}
This follows either by looking at figure \ref{fig:hyp2} or by using the analysis of the limiting sequences of $\mathcal{A}$ coordinates in example \ref{exmp:a11} combined with the map $\rho: \mathcal{A}_Q \rightarrow \mathcal{X}_Q$.
Thus, writing $g(x,y)=g'(\alpha,\beta)$ where $ \alpha = x^{-1}$ and $\beta= xy$ we can see, using the invariance of $g$, that $g(x,y)= g'(0,z)$ and only depends on $z$ and is independent of $w$. This implies that $g \circ s_c \circ \pi = g$ for any choice of $c$. 

The only thing left to prove is that $g$ is actually a rational function in $T^2$. To see this, pick $s= s_1$ as our splitting. Let $\tilde{x},\tilde{y}$ be the $\mathcal{X}$ coordinates of the surface with $w=-1$. We have 
\begin{equation}
    \tilde{x} = \dfrac{4z}{(1-z)^2} \quad \tilde{y}= (\dfrac{1-z}{1+z})^2 \quad \pi^*(T) = \dfrac{z+1}{\sqrt{z}}
\end{equation}
and finally 
\begin{equation}
    \tilde{x} = ((\frac{T}{2})^2-1)^{-1} \quad \tilde{y}= 1- (\frac{2}{T})^2.
\end{equation}
So $\tilde{x}$ and $\tilde{y}$ are rational functions of $T^2$ and so $g(x,y) = (g \circ s_1 \circ \pi )(x,y) = g(\tilde{x},\tilde{y})$ is. 
\end{proof}

\begin{figure}[ht]
    \centering
    \includegraphics[scale=.2]{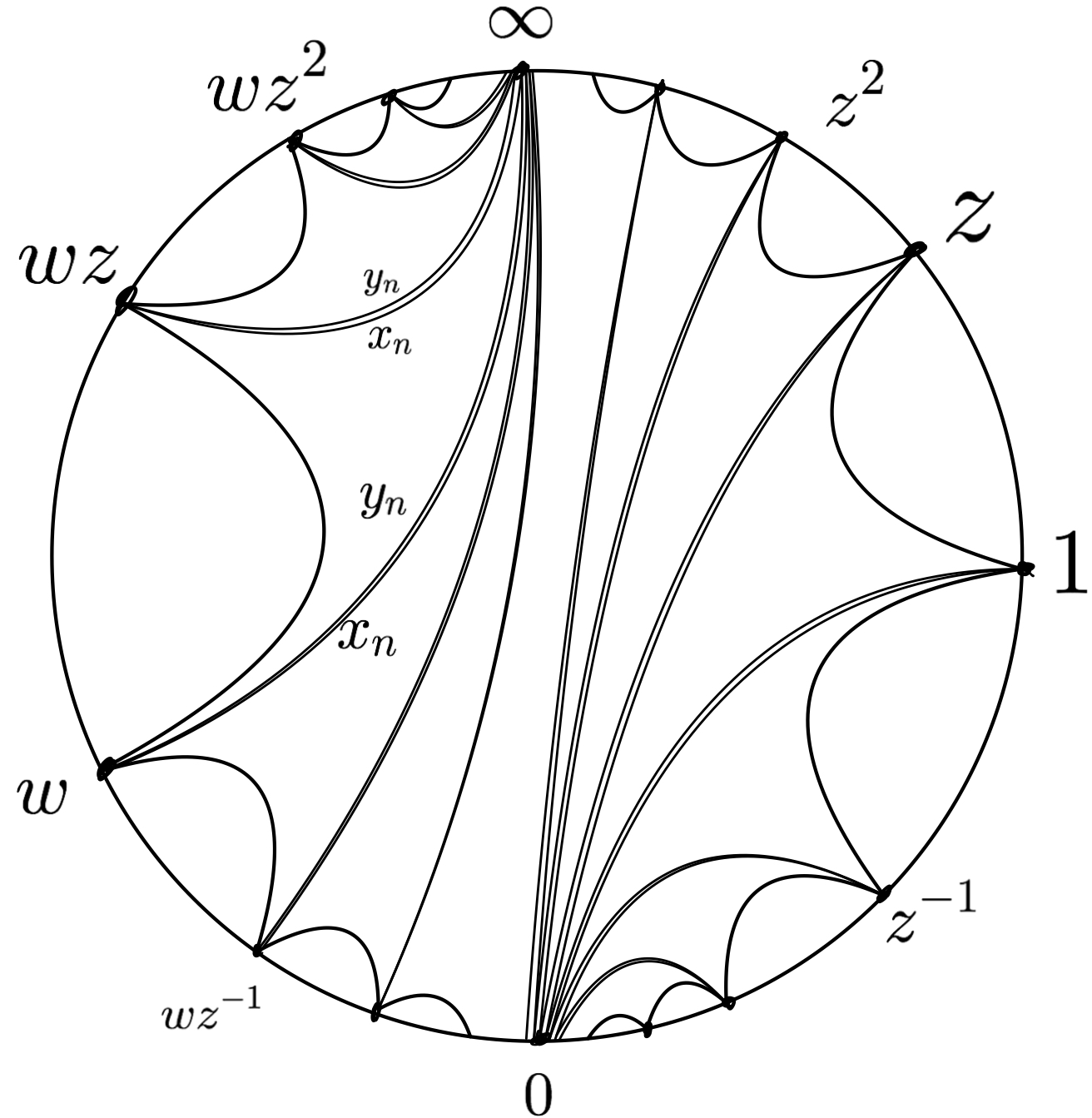}
    \caption{The triangulation of S obtained after applying $\gamma^k$ for $k>>0$. $x_k$ and $y_k$ correspond to nearly the same arc on $S$ and $x_ky_k$ is approximately the cross ratio of the square with endpoints $(0,z^2,z,1)$.}
    \label{fig:hyp2}
\end{figure}

\begin{cor} \label{cor:x}
$\mathbb{R}(\mathcal X_{T_{1,1,1}}, \sqrt{x_1x_2})^\Gamma= \mathbb{R}(\sqrt{G(x_1,x_2)}) $
\end{cor}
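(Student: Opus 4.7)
The plan is to reduce the corollary to Lemma \ref{lem:xa11} via a Galois-theoretic argument applied to the quadratic extension $L := \mathbb{R}(\mathcal{X}_{T_{1,1,1}}, \sqrt{x_1 x_2})$ of $K := \mathbb{R}(\mathcal{X}_{T_{1,1,1}}) = \mathbb{R}(x_1, x_2)$. My first step would be to extend the $\Gamma$-action from $K$ to $L$. Using the mutation formulas for the $\widetilde{A}_1$ quiver one has $\gamma^*(x_1) = x_2(1+x_1)^2$ and $\gamma^*(x_2) = x_1^{-1}$, so $\gamma^*(x_1 x_2) = (1+x_1)^2 x_2/x_1$ is already a square in $L$. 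Setting $\gamma^*(\sqrt{x_1 x_2}) := (1+x_1)\sqrt{x_1 x_2}/x_1$ respects the defining relation $(\sqrt{x_1 x_2})^2 = x_1 x_2$ and so defines an automorphism of $L$; iterating yields a well-defined $\Gamma$-action since $\Gamma \cong \mathbb{Z}$ is free on $\gamma$.

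Next I would verify by direct substitution that $\sqrt{G} = (x_2(x_1+1)+1)/\sqrt{x_1 x_2}$ is $\Gamma$-invariant, and note $\sqrt{G} \notin K$ because $x_1 x_2$ is not a square in the UFD $\mathbb{R}[x_1,x_2]$. This gives the easy inclusion $\mathbb{R}(\sqrt{G}) \subseteq L^\Gamma$. For the reverse inclusion I would invoke the Galois involution $\sigma : \sqrt{x_1 x_2} \mapsto -\sqrt{x_1 x_2}$ of $L/K$ and observe that $\sigma$ commutes with $\gamma^*$, since both $\sigma \circ \gamma^*$ and $\gamma^* \circ \sigma$ send $\sqrt{x_1 x_2}$ to $-(1+x_1)\sqrt{x_1 x_2}/x_1$ while agreeing on $K$. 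Consequently $\sigma$ restricts to an involution of $L^\Gamma$ whose fixed subfield is $L^\Gamma \cap K = K^\Gamma = \mathbb{R}(G)$ by Lemma \ref{lem:xa11}. Artin's theorem applied to $\langle \sigma \rangle$ acting on $L^\Gamma$ forces $[L^\Gamma : \mathbb{R}(G)] \leq 2$, and comparing with the tower $\mathbb{R}(G) \subsetneq \mathbb{R}(\sqrt{G}) \subseteq L^\Gamma$ gives equality $L^\Gamma = \mathbb{R}(\sqrt{G})$.

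The main obstacle will be the compatibility check that $\sigma$ and the extended $\gamma^*$ commute on $L$; this is what permits Galois descent to be applied even though $\Gamma$ itself is infinite. Once this commutation is established, the conclusion is essentially a two-line Galois-theoretic reduction to the previous lemma. A minor subtlety worth verifying is that the sign choice made for $\gamma^*(\sqrt{x_1 x_2})$ is forced (up to composition with $\sigma$) by the requirement that the extended action be a group action; either choice yields the same fixed field, so the conclusion is insensitive to it.
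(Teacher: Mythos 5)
Your argument is correct, and it reaches the corollary by a genuinely different route from the paper. The paper simply re-runs the geometric proof of Lemma \ref{lem:xa11}: an invariant of the enlarged field is again independent of $w$, hence factors through the splitting $s_1$, where $\sqrt{\tilde{x}\tilde{y}} = 2/\pi^*(T)$ and $\tilde{x},\tilde{y}$ are rational in $T^2$, so every invariant is rational in $T=\sqrt{G}$. You instead treat Lemma \ref{lem:xa11} as a black box and run a purely algebraic Galois-descent argument on the quadratic extension $L/K$: the computation $\gamma^*(x_1x_2)=\bigl(\tfrac{1+x_1}{x_1}\bigr)^2 x_1x_2$ showing the action extends, the commutation of $\sigma$ with $\gamma^*$, and the Artin bound $[L^\Gamma:\mathbb{R}(G)]\le 2$ squeezed against $\mathbb{R}(G)\subsetneq\mathbb{R}(\sqrt{G})\subseteq L^\Gamma$ are all verified correctly. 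What your approach buys is independence from the hyperbolic-geometry apparatus and an explicit construction of the $\Gamma$-action on $\sqrt{x_1x_2}$, which the paper leaves implicit; what the paper's approach buys is brevity, since the splitting machinery is already in place and identifies $\sqrt{x_1x_2}$ directly as a rational function of the trace $T$. One caveat: your closing remark that ``either choice yields the same fixed field'' is not obviously true --- the twisted choice $\sqrt{x_1x_2}\mapsto -\tfrac{1+x_1}{x_1}\sqrt{x_1x_2}$ has fixed field $\mathbb{R}(\sqrt{G})$ only if $K$ contains no $\gamma$-anti-invariant element, which you do not check --- but this is harmless, since the sign you actually use is the one forced by positivity (the exchange maps are subtraction-free on $(\mathbb{R}^{>0})^2$, so they preserve the positive square root), and that is exactly the action the paper intends via the identification of $\sqrt{x_1x_2}$ with a function on Teichm\"uller space.
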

This follows by the same argument, noting that $\sqrt{x_1x_2}$ is rational function of $T$. 

\begin{lem} \label{lem:a221}
$\mathbb{R}(\mathcal A_{T_{2,2,1}})^\Gamma$ is generated by 
\begin{equation}
    \{\rho^*(\sqrt{G(x_1,x_2)}) = \dfrac{a_1^2+a_2^2+b_2c_2}{a_1a_2},b_2,c_2\}
\end{equation}
\end{lem}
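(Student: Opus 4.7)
The plan is to reduce to a rank-2 problem via Theorem \ref{thm:adims}, then linearize the $\gamma$-action and apply a standard fixed-field computation for a multiplicative $\mathbb{Z}$-action.

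First I would invoke Theorem \ref{thm:adims} with the mutable subquiver $R^\mu = \{A_1, A_2\}$: freezing $B_2$ and $C_2$ yields an $\widetilde{A}_1$ cluster algebra with coefficients, and the invariants of $\mathbb{R}(\mathcal{A}_{T_{2,2,1}})$ under $\Gamma = \langle \gamma \rangle$ agree with the invariants $K^\Gamma$ of $K := \mathbb{R}(a_1, a_2, b_2, c_2)$. The exchange relation at $A_1$ in this rank-2 ensemble is $a_1 a_1' = a_2^2 + b_2 c_2$, so $\gamma = \{\mu_{A_1}, (A_1\, A_2)\}$ acts on the initial coordinates by
\[
    \gamma: \quad a_1 \longmapsto a_2, \qquad a_2 \longmapsto \frac{a_2^2 + b_2 c_2}{a_1},
\]
while fixing $b_2$ and $c_2$. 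Direct substitution (or the identity $\rho^*(G) = F^2$ for the $\mathcal{X}$-invariant $G$ of Lemma \ref{lem:xa11}) shows that $F = (a_1^2 + a_2^2 + b_2 c_2)/(a_1 a_2)$ is $\gamma$-invariant. Setting $L := \mathbb{R}(F, b_2, c_2)$, we have $L \subseteq K^\Gamma$, and the goal is to prove equality.

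The key observation is that the defining identity $a_1^2 - F a_1 a_2 + a_2^2 + b_2 c_2 = 0$ exhibits $K$ as the function field over $L$ of a smooth conic, and when written in the form $a_2 \mapsto F a_2 - a_1$ the action of $\gamma$ is \emph{linear} on $(a_1, a_2)$, given by the matrix
\[
    M = \begin{pmatrix} 0 & 1 \\ -1 & F \end{pmatrix} \in SL_2(L),
\]
with characteristic polynomial $\lambda^2 - F\lambda + 1$. Over the quadratic extension $L' := L(\sqrt{F^2 - 4})$, $M$ diagonalizes with eigenvalues $\lambda_\pm = (F \pm \sqrt{F^2-4})/2$ and eigenfunctions $w_\pm := a_2 - \lambda_{\mp} a_1$ satisfying $w_+ w_- = -b_2 c_2 \in L$. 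Hence the compositum $K' := K(\sqrt{F^2-4})$ equals $L'(w_+)$, a purely transcendental extension of $L'$ of transcendence degree one, on which $\gamma$ acts by the multiplicative rule $w_+ \mapsto \lambda_+ w_+$. Since $F$ is transcendental over $\mathbb{R}$, so is $\lambda_+$, hence it is not a root of unity, and the only $\gamma$-invariant rational functions of $w_+$ are constants in $L'$; that is, $(K')^\Gamma = L'$.

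The remaining step is the descent $K^\Gamma \subseteq L' \cap K = L$, which amounts to checking $\sqrt{F^2-4} \notin K$. For this I compute
\[
    F^2 - 4 = \frac{\bigl((a_1-a_2)^2 + b_2 c_2\bigr)\bigl((a_1+a_2)^2 + b_2 c_2\bigr)}{a_1^2 a_2^2},
\]
and observe that the two quadratic factors in the numerator are distinct, each irreducible in $\mathbb{R}[a_1, a_2, b_2, c_2]$ (each is linear in $b_2$ with nonzero coefficient $c_2$), so their product cannot be a square in $K$. Combining,
\[
    K^\Gamma \;\subseteq\; (K')^\Gamma \cap K \;=\; L' \cap K \;=\; L = \mathbb{R}(F, b_2, c_2),
\]
which together with the reverse inclusion proves the lemma. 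The main delicate point is this final descent: once one knows that $\sqrt{F^2-4} \notin K$, everything else reduces to the transcendence of $F$ over $\mathbb{R}$ and the elementary fact that a multiplicative $\mathbb{Z}$-action by a non-root-of-unity scalar has only constant invariants on a rational function field in one variable.
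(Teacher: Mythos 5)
Your proof is correct, but it takes a genuinely different route from the paper's. The paper reduces the lemma to the $\mathcal{X}$-space statement via $\rho^*$ and Corollary \ref{cor:x}, and that statement is in turn proved geometrically: $\mathcal{A}_R$ is identified with the decorated Teichm\"uller space of an annulus, one constructs a rational section of the projection to the $\PSL(2,\mathbb{R})$ character variety, shows any invariant is independent of the fiber parameter by analyzing the limiting clusters under iterated Dehn twists, and rules out the ``odd'' part $a_1g+h$ of a putative invariant by exhibiting square roots in the formula for $\tilde{a}_1$. You instead stay entirely inside the field $K=\mathbb{R}(a_1,a_2,b_2,c_2)$: you observe that modulo the relation $a_1^2-Fa_1a_2+a_2^2+b_2c_2=0$ the action of $\gamma$ is $L$-linear with matrix $\left(\begin{smallmatrix}0&1\\-1&F\end{smallmatrix}\right)$, diagonalize over $L'=L(\sqrt{F^2-4})$ to get a pure scaling $w_+\mapsto\lambda_+w_+$ on $K'=L'(w_+)$, invoke the classical fact that scaling by a non--root-of-unity has only constant invariants on a rational function field, and descend to $K$ by checking that $F^2-4$ is not a square in $K$ (the factorization into two distinct irreducibles, each linear in $b_2$, does this). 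I verified the key identities: $\gamma(a_2)=Fa_2-a_1$, $\gamma(w_\pm)=\lambda_\pm w_\pm$, $w_+w_-=-b_2c_2$, and $\lambda_+$ transcendental over $\mathbb{Q}$ since $\lambda_++\lambda_+^{-1}=F$; the descent $L'\cap K=L$ is also handled correctly. What each approach buys: the paper's geometric argument is the template reused for the remaining lemmas of Section 6 and explains the invariant as a trace of monodromy, whereas your argument is self-contained, elementary, and sidesteps the more delicate analytic steps (the rational splitting $s_c$ and the limit argument); it also makes the degree-two descent, which the paper handles somewhat tersely via the subfield $\mathbb{R}(a_1^2,a_2^2,a_1a_2,b_2,c_2)$, completely explicit.
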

\begin{proof}

We apply theorem \ref{thm:adims} with $Q=T_{2,2,1}$ and $R \subset Q$ having $N(R^\mu) = \{A_1,A_2\}$. Then we can see that $\rho^*(\sqrt{G(x_1,x_2)}) $ along with the coefficients of $\mathcal{A}_R$ gives the basis of the lemma. We need to show that any rational invariant is a rational function in this basis. 

Let $f \in \mathbb{R}(\mathcal{A}_R)^\Gamma$ be an invariant function. We can see that $\rho^*$ maps $x_1$ and $x_2$ to $\dfrac{a_2^2}{b_2c_2}$ and $\dfrac{b_2c_2}{a_1^2}$ respectively. Thus 
\begin{equation}
    \rho^* : \mathbb{R}(\mathcal{X}_R , \sqrt{x_1x_2)})^\Gamma(b_2,c_2) \rightarrow \mathbb{R}(\mathcal{A}_R)^\Gamma
\end{equation}

The lemma follows by showing that this map is an isomorphism and applying corollary \ref{cor:x}.

Let $\rho^*(\mathbb{R}(\mathcal{X}_R , \sqrt{x_1x_2)})(b_2,c_2))=K$. We have
\begin{equation}
    \mathbb{R}(a_1^2,a_2^2,a_1a_2,b_2,c_2) = K \subset \mathbb{R}(\mathcal{A}_R) 
\end{equation}
is a degree 2 field extension. We want to show that $ f \in K$. Let $f= a_1g+h$ where $g$ and $h$ are in $K$.

Following a similar argument to the proof of lemma \ref{lem:xa11}, we associate $\mathcal A_{R}$ with the decorated Teichm\"uller space $\tilde{T}(S)$ where $S$ is an annulus with two marked points. Then by the same argument as before, the value of $f(a_1,a_2,b_2,c_2)$ does not depend on the value of $w$. So $f(a_1,a_2,b_2,c_2) = f(\tilde{a_1},\tilde{a_2},b_2,c_2)$ where $\tilde{a_1},\tilde{a_2}$ are the lambda lengths associated to the surface with the same choice of horocycles and $w=-1$

We have 
\begin{equation}
    \tilde{a_1}=\sqrt{\frac{\rho^*(\tilde{x_2})}{b_2c_2}}=\sqrt{(b_2c_2(1-(\frac{2}{\pi^*(T)})^2))^{-1}}=\sqrt{(b_2c_2(1-(\frac{2a_1a_2}{a_1^2+a_2^2+b_2c_2})^2))^{-1}}
\end{equation}

which implies that we may write $\tilde{a_1}$ in terms of $a_1$ but with square roots in the expression. This shows that $g=0$ since $f$ is a rational function. Thus $f \in K$ and the lemma follows

\end{proof}

\begin{lem}
$\mathbb{R}(\mathcal X_{T_{p,q,r}})^\Gamma$ is generated by the functions $G(x_1,x_2)$ of example \ref{exmp:a11}, $y_2(x_2(x_1+1)+1)$ , $z_2(x_2(x_1+1)+1)$ , $w_2(x_2(x_1+1)+1)$, and the remaining $\mathcal X$ coordinates corresponding to nodes that are not connected to nodes $A_1$ or $A_2$.
\end{lem}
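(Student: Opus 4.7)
The plan is to verify invariance of the listed functions directly, and then reduce the generation claim to Lemma \ref{lem:xa11} via a change of variables that trades the non-invariant coordinates $y_2, z_2, w_2$ for their invariant combinations. Set $h(x_1,x_2) := x_2(x_1+1)+1$, so that $G(x_1,x_2) = h^2/(x_1 x_2)$ and the three twisted invariants in the statement are $\tilde y := y_2 h$, $\tilde z := z_2 h$, $\tilde w := w_2 h$. First I would compute the action of $\gamma = \{A_1,(A_1 A_2)\}$ on the $\mathcal X$ coordinates adjacent to $A_1$. In $T_{p,q,r}$ the exchange matrix satisfies $\epsilon_{A_2,A_1} = -2$ and $\epsilon_{B_2,A_1} = \epsilon_{C_2,A_1} = \epsilon_{D_2,A_1} = 1$, so the $\mathcal X$-mutation rule yields $\gamma(x_1) = x_2(1+x_1)^2$, $\gamma(x_2) = x_1^{-1}$, $\gamma(y_2) = y_2 \cdot x_1/(x_1+1)$, and analogously for $z_2, w_2$, while every $\mathcal X$ coordinate not adjacent to $A_1$ or $A_2$ is fixed pointwise. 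A direct calculation gives $\gamma(h) = h\cdot(x_1+1)/x_1$, so $\gamma(\tilde y) = \tilde y$, and similarly for $\tilde z, \tilde w$. Combined with the invariance of $G$ from Lemma \ref{lem:xa11} and the obvious invariance of the outer coordinates, this confirms that every function in the statement lies in $\mathbb{R}(\mathcal X_{T_{p,q,r}})^\Gamma$.

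For the generation step, let $f \in \mathbb{R}(\mathcal X_{T_{p,q,r}})^\Gamma$ be arbitrary, and substitute $y_2 = \tilde y/h$, $z_2 = \tilde z/h$, $w_2 = \tilde w/h$ to rewrite $f$ as a rational expression $F(x_1, x_2, \tilde y, \tilde z, \tilde w, \mathbf u)$, where $\mathbf u$ denotes the tuple of $\gamma$-fixed outer $\mathcal X$ coordinates. Because $\gamma$ acts trivially on $\tilde y, \tilde z, \tilde w, \mathbf u$ and the substitution is $\gamma$-equivariant by the very invariance of $\tilde y, \tilde z, \tilde w$, the invariance of $f$ forces $F$ to be invariant as a rational function of $x_1, x_2$ with the remaining arguments treated as parameters. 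Lemma \ref{lem:xa11}, applied over the base extension $\mathbb{R}(\tilde y, \tilde z, \tilde w, \mathbf u)$, then gives $F \in \mathbb{R}(G, \tilde y, \tilde z, \tilde w, \mathbf u)$, so $f$ lies in the field generated by the listed invariants. Finally, algebraic independence of the $n = p+q+r-2$ generators follows from a Jacobian argument: $G$ depends only on $x_1, x_2$; each of $\tilde y, \tilde z, \tilde w$ involves a distinct variable from $\{y_2, z_2, w_2\}$ not appearing elsewhere; and the outer coordinates involve disjoint variables, so the $(n+1)\times n$ Jacobian has a generically non-vanishing maximal minor.

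The main obstacle will be rigorously justifying the base-change step in the middle of the argument: one must know that Lemma \ref{lem:xa11}, whose proof proceeds through the Teichm\"uller theory of the annulus, survives adjoining transcendentals on which $\Gamma$ acts trivially. Rather than re-running the hyperbolic-geometric proof with extra parameters, the cleanest route is to invoke the general field-theoretic fact that for a group action on $L \supset K$ with invariant subfield $L^\Gamma$, the invariant subfield of $L(\mathbf t) \supset K(\mathbf t)$ under the trivial extension of the action is $L^\Gamma(\mathbf t)$. Some care is also needed for degenerate cases in which one of $p, q, r$ equals $1$: the corresponding outermost node (for instance $D_2$ in the $\widetilde A_{p,q}$ family with $r=1$) and its twisted invariant are simply absent from the list, but the argument carries through verbatim after deleting the missing variable.
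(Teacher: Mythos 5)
Your proposal is correct, and the explicit mutation formulas you derive from the exchange matrix of $T_{p,q,r}$ (in particular $\gamma(h)=h\cdot(x_1+1)/x_1$ and $\gamma(y_2)=y_2\cdot x_1/(x_1+1)$) do check out against the quiver, so the invariance of the listed functions is established. Where you genuinely diverge from the paper is in the generation step. The paper invokes its subensemble machinery: Theorem \ref{thm:xdims}, whose proof records that each outer coordinate transforms as $x_j\mapsto x_j h_j(x_1,\dots)$ so that $\gamma$ acts monomial-by-monomial in the outer variables, together with Corollary \ref{cor:xdims}, which bounds the transcendence degree of $\mathbb{R}(\mathcal X_{T_{p,q,r}})^\Gamma$ over $\mathbb{R}(\mathcal X_{T_{1,1,1}})^\Gamma$ by the number of extra nodes; it then closes with a one-line remark that the listed generators are of degree $1$ in the outer variables and therefore generate. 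You instead make the same structural fact explicit via the birational change of variables $(y_2,z_2,w_2)\mapsto(\tilde y,\tilde z,\tilde w)$, which splits the $\Gamma$-action as the $T_{1,1,1}$-action on $(x_1,x_2)$ times the trivial action on everything else, and then transfer Lemma \ref{lem:xa11} across the purely transcendental extension. The field-theoretic fact you were worried about does hold for arbitrary (not just finite) groups: write $f=P/Q$ in lowest terms with $Q$ monic in $L[\mathbf t]$; a $\Gamma$-automorphism fixing $\mathbf t$ preserves this normal form, so invariance of $f$ forces all coefficients of $P$ and $Q$ into $L^\Gamma$, giving $(L(\mathbf t))^\Gamma=L^\Gamma(\mathbf t)$ by induction on the number of transcendentals. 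Your route buys an actually complete generation argument --- a transcendence-degree bound by itself does not exclude proper algebraic subextensions, which is the gap the paper's ``degree 1'' sentence is compressing --- at the cost of some extra bookkeeping; the paper's route buys brevity by citing its general theorems. Both rest on Lemma \ref{lem:xa11} for the core rank-two computation.
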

\begin{proof}
This follows from lemma \ref{lem:xa11}, theorem \ref{thm:xdims} and corollary \ref{cor:xdims}. We observe that the set of functions $\{G(x_1,x_2), y_2(x_2(x_1+1)+1), z_2(x_2(x_1+1)+1), w_2(x_2(x_1+1)+1)\}$ along with the remaining $\mathcal X$ coordinates are each elements of $\mathbb{R}(\mathcal X_{T_{p,q,r}})^\Gamma$ and by corollary \ref{cor:xdims} there cannot be any more independent generators. By theorem \ref{thm:xdims} applied with $R = T_{1,1,1} \subset Q = T_{p,q,r}$ and lemma \ref{lem:xa11}, any invariant that only depends on $x_1,x_2$ must be generated by $G(x_1,x_2)$. Finally, we note that each of the elements of the given basis is of degree 1 in the variables other than $\{x_1,x_2\}$ and therefore they must generate $\mathbb{R}(\mathcal{X}_Q)^{<\gamma_Q>} / \mathbb{R}(\mathcal{X}_R)^{<\gamma_R>}$.
\end{proof}

\begin{lem}
$\mathbb{R}(\mathcal A_{T_{p,q,r}})^\Gamma$ is generated by $\rho^*(\sqrt{G(x_1,x_2)})$ and the remaining $n-1$ $\mathcal A$ coordinates associated to the nodes other than nodes $A_1$ and $A_2$. 
\end{lem}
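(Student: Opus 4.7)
Plan: Apply Theorem~\ref{thm:adims} to $Q=T_{p,q,r}$ with $R\subset Q$ the subensemble obtained by freezing every node other than $A_1,A_2$. This yields $\mathbb{R}(\mathcal{A}_Q)^\Gamma=\mathbb{R}(\mathcal{A}_R)^\Gamma$ and reduces the problem to a two-variable-mutable system: since $\gamma=\{A_1,(A_1A_2)\}$ only mutates at $A_1$, each of the $n-1$ now-frozen coordinates is literally fixed by $\gamma$ and belongs to the invariant field, while $\rho^*(\sqrt{G(x_1,x_2)})$ is invariant by Lemma~\ref{lem:xa11} via the intertwining of $\rho$ with mutations. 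Letting $C$ denote the subfield generated by the frozen coordinates and setting $K:=C(\rho^*(\sqrt{G}))$, it remains to prove $\mathbb{R}(\mathcal{A}_R)^\Gamma=K$.

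Next, a direct calculation from the exchange rule at $A_1$ (whose incoming arrows come from $B_2,C_2$, and also $D_2$ when $r\ge 2$, with outgoing double arrow to $A_2$) gives
\begin{equation}
F\ :=\ \rho^*\bigl(\sqrt{G(x_1,x_2)}\bigr)\ =\ \frac{a_1^2+a_2^2+M}{a_1 a_2},
\end{equation}
where $M=b_2c_2$ in the $\widetilde A$ case and $M=b_2c_2d_2$ in the $\widetilde D,\widetilde E$ cases. The relation $F a_1 a_2=a_1^2+a_2^2+M$ presents $\mathbb{R}(\mathcal{A}_R)=C(a_1,a_2)$ as the function field of a smooth conic over $K$ with rational points, so $\mathbb{R}(\mathcal{A}_R)\cong K(t)$ for the uniformizer $t:=a_1/a_2$.

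A short computation using $\gamma(a_1)=(a_1^2+M)/a_2$ and $\gamma(a_2)=a_1$ then shows that $\gamma$ acts on $K(t)$ by the M\"obius transformation $t\mapsto F-1/t$, represented by the matrix $\begin{bmatrix}F&-1\\1&0\end{bmatrix}\in\mathrm{SL}_2(K)$ of trace $F$. Since $F$ is transcendental over $C$, the eigenvalues $(F\pm\sqrt{F^2-4})/2$ of this matrix are not roots of unity, so $\gamma$ has infinite order in $\mathrm{PGL}_2(K)$. A L\"uroth-type argument then forces $K(t)^\gamma=K$, which combined with the first step proves the theorem.

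The step I expect to be most delicate is arranging the reduction so that the $\gamma$-action is uniformly described by this single M\"obius transformation across the $\widetilde A$, $\widetilde D$, and $\widetilde E$ families. The Teichm\"uller argument of Lemma~\ref{lem:a221} handles the $\widetilde A$ (and, after puncture-preserving modifications, $\widetilde D$) cases, but the exceptional $\widetilde E$ types admit no surface model, so that route is unavailable. The algebraic parameterization via the conic sidesteps this, but one must still verify case by case that the exchange rule at $A_1$ produces exactly the monomial $M$ claimed and that the action on $t$ really is the stated M\"obius transformation, with no hidden type-dependent corrections.
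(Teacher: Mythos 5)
Your overall strategy (reduce to the two mutable nodes, exhibit $F=\rho^*(\sqrt{G})=(a_1^2+a_2^2+M)/(a_1a_2)$, and kill the remaining invariants by an infinite-order action on a genus-zero function field over $K=C(F)$) is a genuinely different, purely algebraic route from the paper's, and it has the virtue of treating the $\widetilde{E}$ cases on the same footing as the rest. But there is a concrete error at the pivotal step. The conic $a_1^2-Fa_1a_2+a_2^2+M=0$ does \emph{not} have a $K$-rational point, and $\mathbb{R}(\mathcal{A}_R)\not\cong K(t)$ for $t=a_1/a_2$. Indeed, both $F$ and $t$ are fixed by the involution $(a_1,a_2)\mapsto(-a_1,-a_2)$, and one checks directly that $K(t)=C(a_1^2,a_2^2,a_1a_2)$, which is the index-$2$ fixed subfield of that involution: from $F-t-t^{-1}=M/(a_1a_2)$ one gets $a_1a_2\in K(t)$ and hence $a_1^2,a_2^2\in K(t)$, but $a_2^2=M/(Ft-t^2-1)$ has odd valuation at the irreducible quadratic $Ft-t^2-1$ (its discriminant $F^2-4$ is not a square in $C(F)$), so $a_2\notin K(t)$. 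Equivalently, the conic fails to have points already over the completion of $K$ at $F=2$, where it becomes $(a_1-a_2)^2+M a_3^2\equiv 0$ and $-M$ is not a square in $C$. Consequently your L\"uroth argument only proves that invariants lying in the index-$2$ subfield $K(t)$ collapse to $K$; it says nothing about a putative invariant of the form $a_1g+h$ with $g\neq 0$. This unaddressed quadratic extension is exactly the issue the paper's proof of Lemma~\ref{lem:a221} spends its second half on: there one writes $f=a_1g+h$ with $g,h\in K=\mathbb{R}(a_1^2,a_2^2,a_1a_2,b_2,c_2)$ and uses the decorated-Teichm\"uller splitting (the surface with $w=-1$) to show that $\tilde a_1$ involves genuine square roots, forcing $g=0$.

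The gap is repairable within your framework: since $\gamma$ acts on the subfield $K(t)$ by a M\"obius transformation of infinite order, it has infinite order as a $K$-automorphism of $\mathbb{R}(\mathcal{A}_R)$, so by Artin's lemma the fixed field cannot have finite index in $\mathbb{R}(\mathcal{A}_R)$ and must be algebraic over $K$; as the conic is smooth, hence geometrically integral, $K$ is algebraically closed in its function field, and the fixed field is exactly $K$. But that is a different argument from the one you wrote, and as submitted the proof does not establish the lemma. For comparison, the paper avoids the uniform computation entirely by reducing all $(p,q,r)$ to the $T_{2,2,1}$ case (tail variables of index $>2$ are inert, and $d_2$ is absorbed via $\tilde c_2=c_2d_2$), at the cost of relying on the Teichm\"uller-theoretic Lemma~\ref{lem:a221}; your reduction to $R$ and your formula for $M$ are consistent with that, and the case-by-case verification you flag as delicate is indeed where the paper also does its bookkeeping.
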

\begin{proof}
There are several cases to check. $S$ be an annulus with one marked point on each boundary, as before and $\tilde{T}(S)$ be the decorated Teichm\"uller space of $S$. 

When $(p,q,r) = (1,1,1)$ or $(2,1,1)$, we have that 
\begin{equation}
    \mathcal{A}_{T_{1,1,1}} \subset \mathcal{A}_{T_{2,1,1}} \subset \mathcal{A}_{T_{2,2,1}} = \tilde{T}(S)
\end{equation}
as the subsets parameterizing surfaces with both boundary lambda lengths or one boundary lambda length equal to 1 respectively. The lemma follows for these cases by a similar argument to lemma \ref{lem:a221}.

When $(p,q,r)=(2,2,2)$ we notice that if we let $\tilde{c_2}=c_2d_2$, then the action $\Gamma$ on the $\mathcal{A}$ coordinate functions is exactly the same as its action on the $\mathcal{A}_{T_{2,2,1}}$ space. Thus, $\mathbb{R}(\mathcal A_{T_{2,2,2}})^\Gamma = \mathbb{R}(\mathcal A_{T_{2,2,1}})^\Gamma(d_2) $ by the map $ \tilde{c_2} \rightarrow c_2$

Finally, the lemma follows for any $(p,q,r)$ by noticing that the coordinates with indices larger than 2 do not affect the action of $\Gamma$. Let $\alpha$ be the set of these coordinates and let $V = T_{\min(p,2),\min(q,2),\min(r,2)}$. Then it is clear that
\begin{equation}
    \mathbb{R}(\mathcal A_{T_{p,q,r}})^\Gamma = \mathbb{R}(\mathcal A_V)^\Gamma(\alpha)
    \qedhere
\end{equation}
\end{proof}

\begin{rem}
Nowhere in the proof of this theorem did we use the fact that $p^{-1}+q^{-1}+r^{-1} > 1$. Thus the theorem holds for any type $T_{p,q,r}$ cluster ensemble. When $p^{-1}+q^{-1}+r^{-1} = 1$, $T_{p,q,r}$ is a quiver associated to a doubly extended root system of $E$ type.
\end{rem}

%\begin{rem}
%The cluster modular groups of the affine type cluster ensembles are all isomorphic to $\mathbb{Z}\times G$ where $G$ is some finite group. The element $\gamma $ generates a subgroup $d\mathbb{Z} \subset \mathbb{Z}$ where $d = lcm(p,q), 2n, 6, 12, 30 $ for types $\tilde{A_{p,q}}, \tilde{D_n}, \tilde{E_6}, \tilde{E_7}, \tilde{E_8}$ respectively. In this way we can see that 
%\end{rem}

\section{Conjectures about the general structure of invariants}\label{sec:conjectures}

There are many questions we can ask about invariant functions for general cluster ensembles. For example, are there non trivial rational invariants for any subgroup of $\Gamma$?. This is almost certainly negative, but we may at least hope that there is some condition that ensures the existence of invariants. 

\subsection{Further Geometric structures related to invariants}
We suspect that the analysis of section 5 extends to cluster ensembles associated to the higher Teichm\"uller spaces of Fock and Goncharov in \cite{FockGonch:1}. The mapping class group of the surface will again be a canonical subgroup of the cluster modular group of these ensembles. As before, it is possible to compute traces of monodromy operators about loops on the surface in terms of the $\mathcal{X}$ coordinates, and these will give $\mathcal{X}$ invariants for mapping classes that preserve the given loop. 

There is also evidence that invariant functions may aid in the analysis of the gluing equations for more general geometric structures on manifolds, generalizing remark \ref{rem:glue}. The work of Nagao, Terashima, and Yamazaki in \cite{hyperbolic3man} gives indications of this via their notion of a ``parameter periodicity equation''. 

\subsection{Correspondence between $\mathcal A$ and $\mathcal X$ Invariants}

We will give some evidence for a correspondence between the invariants of the $\mathcal A$ and $\mathcal X$ spaces in terms of denominator vectors.

\begin{defn}
The \emph{denominator vector} of a Laurent polynomial is the vector of exponents appearing in the denominator of the polynomial written as a single fraction. 
\end{defn}

\begin{defn}

We call a pair of bases or partial bases for $\mathcal A$ and $\mathcal X$ invariants that correspond via denominator vectors a \emph{correspondence basis}.

\end{defn}

Through all of the examples of section 3, we can explicitly see examples of correspondences between the $\mathcal A$ and $\mathcal X$ invariants. In the cases where no $\mathcal X$ invariants are given, there are Casimir $\mathcal X$ invariants corresponding via denominator vectors to each of the $\mathcal A$ invariants. However, it is not clear in most of the examples that the functions given are actually a full basis of invariants.
\begin{exmp}
Explicitly, this correspondence for the invariants of example \ref{exmp:d411:1} are as follows:
\begin{align*}
    \dfrac{(a_1a_4+a_2a_5+a_3a_6)^2}{a_1a_2a_3a_4a_5a_6} & \xleftrightarrow{\hspace{1cm}} (x_1x_2x_3x_4x_5x_6)^{-1} \\
    \dfrac{(a_1a_4+a_2a_5+a_3a_6)}{a_4a_5a_6} & \xleftrightarrow{\hspace{1cm}} (x_4x_5x_6)^{-1} \\
    \dfrac{a_1}{a_4} & \xleftrightarrow{\hspace{1cm}} \dfrac{x_1}{x_4}
\end{align*}

\end{exmp}

It is possible to give ample evidence for this correspondence in the case that our algebra is associated to a surface. It is easy to see that for any puncture on our surface, the formula for the length of the horocycle about that puncture and the Casimir $\mathcal X$ invariant for the puncture correspond via denominators. Similarly, we get a correspondence of denominators between squared traces of monodromies about boundary components and horocycle lengths about marked points on the boundary. 
Furthermore, theorem 6.1 shows that there is a correspondence basis for the invariants for the mapping class group on all of the affine cluster ensembles.

\subsection{Laurent Property of Invariants}

Many of the examples of $\mathcal{A}$ invariants are Laurent polynomials in the $\mathcal{A}$ coordinates. This is not true in every case, as the functions of example \ref{exmp:d411:1} include inverses of Laurent polynomials too. However we may conjecture the following: 

\begin{con}
    There is a basis of $\mathbb{R}(\mathcal{X}_Q)^{\Gamma_\circ}$ and corresponding basis of $\mathbb{R}(\mathcal{A}_Q)^{\Gamma_\circ}$ such that $\Gamma$ acts on the basis of  $\mathbb{R}(\mathcal{A}_Q)^{\Gamma_\circ}$ by positive Laurent polynomials. 
\end{con}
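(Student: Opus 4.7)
The plan is to build the promised basis by orbit-summing a known positive Laurent basis of the ambient cluster field and then to check the positive Laurent transformation law directly from the Laurent phenomenon. Throughout I would assume $\Gamma_\circ$ is normal in some supergroup $\Gamma' \subset \Gamma_Q$ (so that $\Gamma'/\Gamma_\circ$ acts on the invariant subfield), since otherwise the statement about $\Gamma$ acting on the invariant basis needs to be interpreted more carefully. The key observation driving the approach is that individual cluster variables are already positive Laurent polynomials in any cluster, and the action of any $\gamma \in \Gamma_Q$ on a cluster variable is itself a positive Laurent polynomial in any other cluster; the task is to lift this positivity from the cluster monomials to the invariant subring.

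For the construction, I would use a canonical positive basis of $\mathbb{R}(\mathcal{A}_Q)$: the bracelet/bangle basis when the ensemble comes from a surface, and a scattering-diagram theta basis more generally. Each basis element is a positive Laurent polynomial in every cluster, and the cluster modular group permutes the basis elements up to positive Laurent scaling (since it acts by quiver automorphisms composed with mutations, both of which preserve positivity). Candidate invariants for $\Gamma_\circ$ are obtained by summing such basis elements over finite $\Gamma_\circ$-orbits; where orbits are infinite, one substitutes the geometric invariants of section 5 — horocycle lengths and traces of monodromies around $\Gamma_\circ$-invariant loops — which were shown there to be expressible as positive combinations of lambda lengths. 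The corresponding $\mathcal{X}$-basis is built by the same orbit-summing applied to a positive basis of $\mathbb{R}(\mathcal{X}_Q)$; the denominator-vector correspondence then follows from the explicit form of $\rho^*$, which sends $\mathcal{X}$-monomials to $\mathcal{A}$-monomials determined by the exchange matrix, as already verified in the affine classification of Theorem 6.1.

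The positive-Laurent transformation law would then be a formal consequence: if $F = \sum_{[b] \in \mathcal{O}} b$ is such an orbit sum, then $\gamma(F) = \sum_{[b]} \gamma(b)$, and each $\gamma(b)$ is a positive Laurent polynomial in the original cluster variables; since $\gamma(F)$ equals another invariant $F'$ (a different orbit sum), the ``action'' of $\gamma$ expressed in terms of the invariant basis becomes a positive integer combination of other $F'$'s after cancellation.

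The main obstacle is that the existence of a canonical positive basis of $\mathbb{R}(\mathcal{A}_Q)$ permuted up to positive Laurent scaling by $\Gamma_Q$ is genuinely hard: it is essentially the content of strong positivity conjectures, and outside the surface and acyclic cases a clean such basis is not presently available. Even granting such a basis, one must show that the orbit sums actually \emph{span} the invariant subfield rather than merely lying in it, which requires a dimension count comparable to Corollary \ref{cor:xdims} but for general subgroups. Finally, when $\Gamma_\circ$-orbits on the basis are infinite — which happens as soon as one leaves the surface/affine setting and considers finite-index subgroups of exotic cluster modular groups — naive orbit-summing fails, and one needs an alternative, likely coming from higher-Teichm\"uller or scattering-diagram geometry as hinted at in section 7.1.
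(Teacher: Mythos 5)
First, a point of calibration: the statement you are proving is stated in the paper as a \emph{conjecture}, and the paper offers no proof of it --- only supporting evidence from the examples of Section 3 (explicitly excluding Example \ref{d411:2}) and the affine classification of Theorem 6.1. So there is no ``paper's own proof'' to compare against; any complete argument you supplied would be new mathematics. Your proposal is a reasonable research program, but as written it is not a proof, and you are candid about several of the reasons why: the existence of a $\Gamma_Q$-equivariant positive basis (bracelets/theta functions with the required positivity and permutation-up-to-scaling property) is itself at the level of the strong positivity conjectures outside the surface and acyclic settings; orbit sums must be shown to \emph{generate} the invariant field, not merely lie in it; and infinite $\Gamma_\circ$-orbits break the summation outright.

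There is one additional gap you do not fully confront, and it is the crux of the conjecture. The claim is that $\Gamma$ acts on the basis of $\mathbb{R}(\mathcal{A}_Q)^{\Gamma_\circ}$ by positive Laurent polynomials \emph{in that basis}. Your final paragraph argues that $\gamma(F)=\sum_b \gamma(b)$ is a positive Laurent polynomial in the original cluster variables and then asserts that it ``becomes a positive integer combination of other $F'$'s after cancellation.'' That step does not follow: rewriting an element of the invariant subfield in terms of a chosen transcendence basis of invariants is an entirely different operation from expanding it in cluster monomials, and positivity in the cluster variables does not transfer. (Compare Example \ref{G233} in the paper, where the action on the invariant basis is by Laurent \emph{monomials} such as $F_1/F_2$ --- establishing even non-negativity of the exponents or coefficients in such expressions requires an argument specific to the invariant ring.) Indeed the paper's own Example \ref{d411:2} is flagged as \emph{not} supporting the conjecture, which suggests that any proof must identify which bases work, rather than deducing the transformation law formally from orbit-summing. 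Without a mechanism that controls the change of variables from cluster coordinates to the invariant basis, the central claim remains unproved even granting all the positivity inputs you assume.
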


The examples of section 3, other than example \ref{d411:2}, give ample evidence for this conjecture. This conjecture also implies, following theorem 6.1, that all of the $\mathcal A$ coordinates appearing on the tails of the $T_{p,q,r}$ quiver are Laurent polynomials in the initial invariants. This is easy to verify case by case, and is probably not too difficult to prove in general. 

It seems that in some cases there is a stronger version of this phenomenon. We may occasionally find a correspondence basis such that the action of the cluster modular group sends the basis to Laurent \emph{monomials} in the basis. In other words, we can pick a basis that gives a representation 
\begin{equation}
    \Gamma/\Gamma_\circ \rightarrow \GL(k,\mathbb{Z})
\end{equation}

Surprisingly, we have that the action of $\Gamma $ on the $\mathcal X$ invariants is also by Laurent monomials and the induced representation seems to be identical in this case. 

\begin{exmp}

Continuing example \ref{G233}, we can take $(F_1,F_2)$ as a possibly partial basis of invariants. Then one may check that the paths $\tau=\{34,(12)\}$ and $r = \{414 ,(132)\}$ generate $D_{12}$ and we have that $\tau((F_1,F_2))=(F_1,F_1/F_2)$ and $r((F_1,F_2))=(F_1/F_2,F_1)$. 
Thus we have the representation 
\begin{equation}
   \pi: D_{12} \rightarrow \GL(2,\mathbb{Z})
\end{equation}
given by 
\begin{equation}
    \pi(\tau)= \begin{bmatrix}
 1 & 1 \\
 0 & -1
\end{bmatrix},
\pi(r)= \begin{bmatrix}
  1 & 1 \\
 -1 & 0
\end{bmatrix}
\end{equation}

\end{exmp}

\newpage
\bibliographystyle{ieeetr}
\bibliography{References}

\end{document}